\documentclass[12pt]{article}
\RequirePackage{amsthm,amsmath,amsfonts,mathrsfs,amssymb,hyperref}
\usepackage{xcolor}
\usepackage{graphicx,stmaryrd,enumitem}
\usepackage{dsfont,cite}
\newcommand{\ind}{1\hspace{-0.098cm}\mathrm{l}}

\def\dd{\mbox{d}}

\def\P{\mathbb{P}}
\def\R{\mathbb{R}}

\textheight=23cm
\textwidth=16cm
\voffset=-1.5cm
\hoffset=-1.5cm
\parskip=2mm 
\newtheorem{thm}{Theorem}

\newtheorem{cor}{Corollary}
\newtheorem{lem}{Lemma}

\newtheorem{defn}{Definition}

\def\deq{\mathrel{\stackrel{d}{=}}} 

\renewcommand{\geq}{\geqslant}
\renewcommand{\leq}{\leqslant}
\setlength{\parindent}{0pt}

\usepackage{tikz,authblk}

\def\br{\mathbf{r}}
\def\bn{\mathbf{n}}

\title{Persistence probabilities for MA(1) sequences with uniform innovations}
\author[1]{Frank Aurzada}
\author[2,3]{Kilian Raschel}
\affil[1]{Technical University of Darmstadt, Germany \href{mailto:aurzada@mathematik.tu-darmstadt.de}{\texttt{aurzada@mathematik.tu-darmstadt.de}}}
\affil[2]{CNRS, Université d'Angers, France, \href{mailto:raschel@math.cnrs.fr}{\texttt{raschel@math.cnrs.fr}}}
\affil[3]{International Research Laboratory France-Vietnam in Mathematics and its Applications, CNRS - VAST - VIASM}
\date{\today}

\begin{document}
\maketitle
\allowdisplaybreaks

\begin{abstract}
We study the persistence probabilities of a moving average process of order one with uniform innovations. We identify a number of regions—characterized by the location of the uniform distribution and the coupling parameter of the process—where the persistence probabilities have qualitatively different behaviour. We obtain the generating functions of the persistence probabilities explicitly in all possible regions. In some of the regions, the persistence probabilities can be expressed explicitly in terms of various combinatorial quantities.
\end{abstract}

{\bf Keywords:} persistence probability; moving average process; exit time; Mallows-Riordan polynomials

\section{Introduction and main results}
In recent years, there has been a lot of interest in the study of the persistence probabilities of various stochastic processes, many of them with relations to different physical systems. The persistence probability related to a one-dimensional stochastic process is simply the probability that the process does not change sign in a finite number of steps. In physical terms, the persistence probabilities (and in particular their the rate of decay) measure how fast an underlying physical system started in a non-equilibrium state returns to equilibrium. We refer to the survey papers \cite{Bray2013Persistence,SALCEDOSANZ20221} and the monograph \cite{Metzler2014FirstPassage} for an overview on the relevance of the question to physical systems and to \cite{Aurzada2015Persistence} for a survey of the mathematical literature.

In the present paper, we aim to compute the persistence probabilities of a moving average process of order one with uniform innovations. More concretely, for $n\geq 1$ and a coupling parameter $\theta\in\R$, let us look at the probability
\begin{equation}
\label{eq:def_PP}
   p_{n}^a(\theta)  := \mathbb P(X_2\geq \theta X_1,X_3\geq \theta X_2,\ldots,X_{n+1}\geq \theta X_{n}),
\end{equation}
when the $(X_i)$ are i.i.d.\ uniform on $[-a,1]$ with either $a\geq 0$ or $a\in(-1,0]$ (with the convenient abbreviation $p_0^a(\theta):=1$). Let us mention that fixing the right end-point of the uniform distribution at $1$ is w.l.o.g., so that we treat all possible uniform distributions. Fixing the right end point is for notational simplicity.

We note that \eqref{eq:def_PP} is the persistence (i.e.\ non-negativity) probability for the MA(1) process $Z_n:=X_n-\theta X_{n-1}$. The present paper continues the study initiated in \cite{MaDh-01} where the symmetric case $a=1$ is treated.

Another way of looking at the problem is as follows: Consider the $n+1$-dimensional cube $[-a,1]^{n+1}$. We cut this cube along the hyperplanes $\theta x_i = x_{i+1}$, $i=1,\ldots,n$, and keep only the part with $\theta x_i < x_{i+1}$ for all $i$. The question we treat here is: How large is the volume of the remaining polytope (relative to the volume $(1+a)^{n+1}$), depending on $a$ and $\theta$?

Before we can state our main result, one more notation is needed: Define for $z\in\mathbb C$ and $r\in\mathbb C$ with $|r|<1$ the deformed exponential function by
$$
    E(r,z):=\sum_{n=0}^\infty \frac{r^{ n(n-1)/2}}{n!}z^n.
$$
Given this notation, we can already present our main results. Depending on the parameters $a\in(-1,\infty)$ and $\theta\in\R$, the persistence probabilities have a qualitatively different form. The different regions of the parameters with similar structure are depicted in Figure~\ref{fig:map}. We now summarize the results. Their proofs and further refinements and corollaries can be found in subsequent subsections.

The results give the explicit generating function of the persistence probabilities $(p_n^a(\theta))$ in the different regions.

\begin{thm} \label{thm:main}
\begin{enumerate}[label={\rm (\Alph{*})},ref={\rm (\Alph{*})}]
\setcounter{enumi}{1}
    \item\label{region-B}
    Let either $\theta\in[0,1]$ and $a\geq 0$ or $\theta\in[-1,0]$ and $a\in[-\theta,-\frac{1}{\theta}]$.
Then
\begin{equation}  \label{eqn:genfunctiongenaprima}
\sum_{n=0}^\infty  p_{n}^a(\theta) z^n
= \frac{E\left(\theta,\frac{a z}{1+a}\right) -E\left(\theta,\frac{-z}{1+a}\right) }{z  E\left( \theta,\frac{-z}{1+a}\right)}.
\end{equation}
\setcounter{enumi}{6}
\item\label{region-G}
Let $\theta\in[-1,0]$ and $a\geq-1/\theta$. Then
\begin{equation} \label{eqn:greengfprime}
 \sum_{n=0}^\infty p_n^a(\theta) z^n = \frac{\theta a+1}{\theta(1+a)}+ \frac{ E(\theta,\frac{-z}{\theta(1+a)})-E(\theta,\frac{-z}{1+a})  }{zE(\theta,\frac{-z}{1+a})}.
     \end{equation}
\setcounter{enumi}{24}
\item\label{region-Y}
Let $\theta\in[-1,0]$ and $a\in[0,-\theta]$. Then
\begin{equation}
   \label{eq:formuma_pn_a<1_-1<theta<0_bisprime}
    \sum_{n=0}^\infty p_n^a(\theta)z^n=
    \frac{1}{z}\left(\frac{1}{\frac{E(\theta, \frac{az}{\theta(1 + a)})}{E(\theta, \frac{az}{1 + a})}-\frac{1+\frac{a}{\theta}}{1+a} z}-1\right).
\end{equation}

     \setcounter{enumi}{14}
     \item\label{region-O}
     Let $\theta\in(0,1]$ and $-\theta\leq a\leq 0$. Then, with $x_+:=\min(x,0)$,
     \begin{equation}  \label{eqn:darkorangegfprime}
      \sum_{n=0}^\infty p_n^a(\theta) z^n=\frac{\sum_{i=0}^\infty (-1)^i  \frac{(\theta^i+a)_+^{i+1}}{(i+1)!\theta^{i(i+1)/2}(1+a)^{i+1}}\, z^i}{1-z\cdot \sum_{i=0}^\infty (-1)^i  \frac{(\theta^i+a)_+^{i+1}}{(i+1)!\theta^{i(i+1)/2}(1+a)^{i+1}}\, z^i}.
\end{equation}

\setcounter{enumi}{22}
\item\label{region-W} Let $\theta\in[-1,1]$ and $-1<a\leq (-\theta)\wedge 0$. Then $p_n^a(\theta)=1$ for all $n\geq 0$.
\end{enumerate}
\end{thm}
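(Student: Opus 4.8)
The plan is to prove the \emph{deterministic} statement that, under the stated constraints, the defining inequalities $X_{i+1}\geq \theta X_i$ hold surely (for every point of the support), so that the event in \eqref{eq:def_PP} is the whole sample space and hence $p_n^a(\theta)=1$. The key observation is that the hypothesis $-1<a\leq(-\theta)\wedge 0$ forces $a\leq 0$, so the left end-point $-a$ of the support $[-a,1]$ is non-negative; consequently every $X_i$ lies in $[-a,1]\subseteq[0,1]$, with minimal possible value $-a\geq 0$ and maximal possible value $1$.

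Given this, I would reduce the whole statement to the two-variable domination $y\geq\theta x$ for all $x,y\in[-a,1]$: if this holds pointwise, then each event $\{X_{i+1}\geq\theta X_i\}$ is certain and so is their intersection, giving $p_n^a(\theta)=1$ for every $n$. Since the smallest admissible value of $y$ is $-a$, the claim collapses to verifying $\max_{x\in[-a,1]}\theta x\leq -a$. I would then split on the sign of $\theta$. For $\theta\in[-1,0]$ one has $\theta x\leq 0\leq -a$ because $x\geq 0$, so the bound holds automatically---matching the fact that in this range the constraint $(-\theta)\wedge 0=0$ merely requires $a\leq 0$. For $\theta\in(0,1]$ the maximum is attained at $x=1$, so $\max_{x}\theta x=\theta$, and the required bound $\theta\leq -a$ is precisely the binding hypothesis $a\leq -\theta=(-\theta)\wedge 0$. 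Combining the two regimes establishes $y\geq\theta x$ throughout $[-a,1]^2$.

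There is no genuine obstacle here; the argument is purely a worst-case domination and involves no probability beyond the observation that a surely-true event has probability one. The only point requiring a little care is the boundary $a=-\theta$ (with $\theta>0$), where the extremal configuration $x=1$, $y=-a=\theta$ yields the equality $y=\theta x$; because the inequalities in \eqref{eq:def_PP} are non-strict, this case is still admissible, so the conclusion $p_n^a(\theta)=1$ persists all the way to the boundary of the region.
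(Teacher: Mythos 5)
Your argument for the white region \ref{region-W} is correct, and it is essentially identical to the paper's own proof (Lemma~\ref{lem:trivialcases}\ref{item-a}\ref{item-i}): since $a\leq 0$ puts the support $[-a,1]$ inside $[0,1]$, one checks the worst case $\theta X_i\leq \theta\leq -a\leq X_{i+1}$ for $\theta\in(0,1]$ and $\theta X_i\leq 0\leq -a\leq X_{i+1}$ for $\theta\leq 0$, so the event is sure. Your handling of the boundary $a=-\theta$ via the non-strict inequalities is also fine.

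However, there is a genuine and decisive gap: the statement you were asked to prove is the full Theorem~\ref{thm:main}, and your proposal addresses only part \ref{region-W}. The four remaining parts --- the generating-function identities \eqref{eqn:genfunctiongenaprima}, \eqref{eqn:greengfprime}, \eqref{eq:formuma_pn_a<1_-1<theta<0_bisprime}, and \eqref{eqn:darkorangegfprime} --- are the substance of the theorem, and no ``worst-case domination'' argument can touch them, because in those parameter regions the events $\{X_{i+1}\geq\theta X_i\}$ are \emph{not} sure and the probabilities $p_n^a(\theta)$ decay (exponentially or faster) in $n$. Concretely, the paper proves \ref{region-B} by an integral recursion: writing $p_n^a(\theta)$ as an iterated integral (valid because $\theta x\in[-a,1]$ whenever $x\in[-a,1]$ in that region) and repeatedly integrating by parts with $H_k(x)=x^k/k!$ yields a renewal-type convolution equation whose generating-function form gives \eqref{eqn:genfunctiongenaprima}. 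Part \ref{region-G} is then reduced to the boundary case $a=-1/\theta$ of the blue region by conditioning on the a.s.\ consequence $X_i\geq 1/\theta$ of the persistence event. Part \ref{region-Y} follows from \ref{region-G} via the duality \eqref{eqn:hiddenduality2pic} of Corollary~\ref{cor:second_hidden_duality}. Part \ref{region-O} is obtained by first computing the probabilities explicitly for $\theta\geq 1$, $a\in(-1,0]$ (Lemma~\ref{lem:grey}, where they are piecewise polynomial and eventually vanish when $a<0$), and then applying the inversion duality \eqref{eqn:hiddendualityGF} of Lemma~\ref{lem:duality2}. Your claim that ``there is no genuine obstacle here'' is true only of the one trivial part you treated; as a proof of the theorem, the proposal is incomplete in an essential way.
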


\begin{figure}[ht!]
\begin{center}
\begin{tikzpicture}[scale=0.8]
\filldraw[fill=blue!50!white, draw=blue!50!white] (-4,4) -- (0,0) -- (0,4) -- cycle; 
\filldraw[fill=blue!50!white, draw=blue!50!white] (0,4) -- (0,0) -- (4,0) -- (4,4) -- cycle; 

\node [black] at (2,2) {Eqn.\ \eqref{eqn:genfunctiongenaprima}};

\filldraw[fill=blue!30!white, draw=blue!30!white] (4,0) rectangle (8,4); 
\node [black] at (6.0,2) {duality~\eqref{eq:duality_theta/pos}};

\filldraw[fill=blue!50!white, draw=blue!50!white] (0,4) rectangle (4,8); 

\node [black] at (2,6) {Eqn.\ \eqref{eqn:genfunctiongenaprima}};

\filldraw[fill=blue!30!white, draw=blue!30!white] (4,4) rectangle (8,8);
\node [black] at (6,6) {duality~\eqref{eq:duality_theta/pos}};

\fill[blue!50!white]  plot[domain=-4:-2, samples=100] (\x, {-16/\x})   -- (-2,4) -- (-4,4) -- cycle;
\fill[blue!50!white]
   (0,4)  -- (0,8)  -- (-2.1,8)  -- (-2.1,4)  -- cycle;

\node [black] at (-1.3,3.5) {Eqn.\ \eqref{eqn:genfunctiongenaprima}};

\fill[green!60!white]
  plot[domain=-4:-2, samples=100] (\x, {-16/\x})
  -- (-2, 8) -- (-4, 8) -- cycle;
\node [black] at (-3,7.5) {Eqn.\ \eqref{eqn:greengfprime}};

\fill[blue!30!white]
  (-8,4)
  -- (-8,2)
  plot[domain=-8:-4, samples=100] (\x, {-16/\x})
  -- (-4,4)
  -- (-8,4)
  -- cycle;

\filldraw[fill=yellow!30!white, draw=yellow!30!white] (-4,4) -- (-5,3.2) -- (-6,2.67) -- (-7,2.29) -- (-8,2) -- (-8,0) -- (-4,0) -- cycle;

\filldraw[fill=yellow!60!white, draw=yellow!60!white] (-4,4) -- (0,0) -- (-4,0) -- cycle;

\node [black] at (-2.6,0.7) {Eqn.\ \eqref{eq:formuma_pn_a<1_-1<theta<0_bisprime}};

\filldraw[fill=orange!80!white, draw=orange!30!white] (0,0) -- (4,-4) -- (4,0) -- cycle;

\node [black] at (2.55,-1) {Eqn.\ \eqref{eqn:darkorangegfprime}};


\fill[orange!30!white]
  plot[domain=4:8, samples=100] (\x, {-16/\x})
  -- (8,0) -- (4,0) -- cycle;
\node [black] at (5.5,-1) {Eqn.\ \eqref{eqn:greysimpleclaim}};
\draw[-,dashed] (4,0) -- (4,-4);
\draw[<->,thick] (3.5,-2.5) -- (4.5,-2.5);
\node [black] at (4,-3) {\eqref{eqn:hiddenduality}};

\node [black] at (-3,4) {\eqref{eqn:hiddenduality2pic}};
\draw[<->,thick] (-3.5,3) -- (-3.5,5);

\node [black] at (-4,-2) {$p_n^a(\theta)\equiv 1$};
\node [black] at (1.5,-3.3) {$p_n^a(\theta)\equiv 1$};
\node [black] at (6.5,-3.3) {$p_n^a(\theta)\equiv 0$};

\filldraw[fill=blue!30!white, draw=blue!30!white] (-8,8) -- (-8,4) -- (-4,4) -- cycle;
\filldraw[fill=green!30!white, draw=green!40!white] (-8,8) -- (-4,8) -- (-4,4) -- cycle;

\draw[<->,thick] (3,5) -- (5,3);
\draw[<->,thick] (5,5) -- (3,3);
\draw[-,dashed] (4,0) -- (4,8);
\draw[-,dashed] (0,4) -- (8,4);

\draw[<->,thick] (-7,5.5) -- (-1.5,5.5);
\draw[<->,thick] (-5,6) -- (-3,6);
\draw[<->,thick] (-5,2) -- (-3,2);
\draw[<->,thick] (-7,2.5) -- (-1.5,2.5);
\node [black] at (-6.8,5) {duality~\eqref{eq:duality_theta/neg}};
\node [black] at (-6.8,3) {duality~\eqref{eq:duality_theta/neg}};
\node [black] at (-5.3,6.5) {duality~\eqref{eq:duality_theta/neg}};
\node [black] at (-5.3,1.5) {duality~\eqref{eq:duality_theta/neg}};
\draw[-,dashed] (-4,0) -- (-4,8);
\draw[-,dashed] (0,4) -- (-8,4);

\draw[->,very thick] (-9,0) -- (9,0);
\draw[->,very thick] (0,-4) -- (0,9);
\node [black] at (9.5,0) {\large $\theta$};
\node [black] at (4,-0.5) {\large $1$};
\node [black] at (-4,-0.5) {\large $-1$};
\node [black] at (0,9.5) {\large $a$};
\node [black] at (0.3,4.4) {\large $1$};
\node [black] at (0.5,-0.5) {\large $0$};
\node [black] at (-0.7,-3.7) {\large $-1$};
\draw[-,thick] (-9,-4) -- (9,-4);
\end{tikzpicture}
\end{center}
\caption{Phase diagram of the main results}
\label{fig:map}
\end{figure}
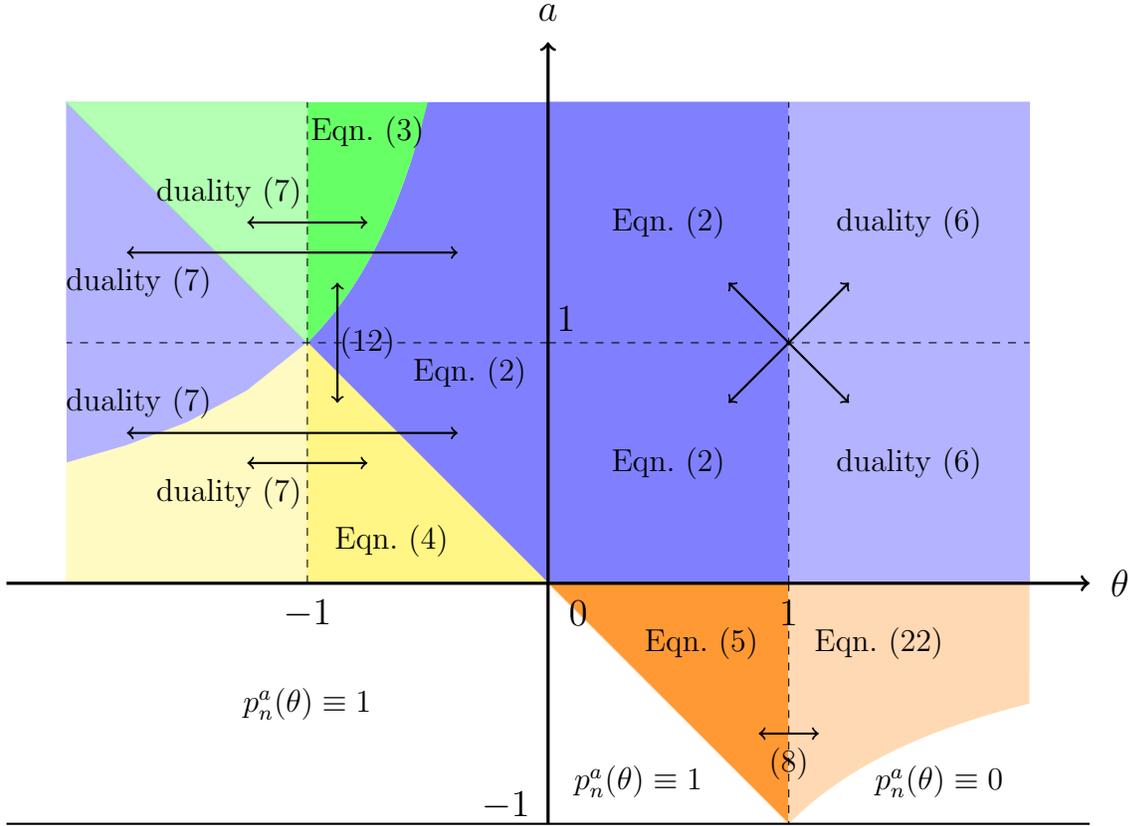

It is worth noting that, in the study of persistence problems, explicit and comprehensive results like Theorem~\ref{thm:main} are rather rare; most works instead concentrate on asymptotic behavior, often through the persistence exponent.

A few comments on these results are due:
\begin{enumerate}
\item We marked the different regions with colours for a better overview. Correspondingly, the letters in  Theorem~\ref{thm:main} stand for blue \ref{region-B}, green \ref{region-G}, yellow \ref{region-Y},   orange \ref{region-O}, and white \ref{region-W}, cf.\ Figure~\ref{fig:map}.
\item The theorem only considers the case $\theta\in[-1,1]$. The persistence probabilities for $|\theta|>1$ can be obtained immediately from Theorem~\ref{thm:main} and duality formulas given in Lemma~\ref{lem:duality1} (treating all $a\geq 0$) and Lemma~\ref{lem:duality2} (treating in particular $a\in(-1,0]$). The dualities are of interest in their own right.
\item \label{rem:itempowerseries} It is reasonable to ask for which $z$ the statements in Theorem~\ref{thm:main} hold. This is to be understood as follows: Let $z_0$ be the zero of the denominator of the function on the right-hand side with smallest modulus. Then the quantity on the left-hand side, i.e.\ the generating function of the $(p_n^a(\theta))$, is finite for all $z$ with $|z|<|z_0|$ and both quantities coincide in this case. Also cf.\ Remark~\ref{item:remarkmainpe} below. Note that the quantity on the right-hand sides is finite for all $z\in\mathbb{C}$ except for the zeros of the denominator. Alternatively, the equalities stated in Theorem~\ref{thm:main} may also be interpreted as identities between formal power series.
\item The blue, green, and yellow regions are closely interconnected. More precisely, the formula in the blue region can be transferred into the green one through a conditioning argument. And, by a duality formula in Corollary~\ref{cor:second_hidden_duality}, the formula in the green region is equivalent to the formula in the yellow region.
\item For $\theta=1$ and any $a>-1$, we have $p_n^a(1)=\frac{1}{(n+1)!}$, as can be seen from the borderline of the blue region \ref{region-B} and the orange region \ref{region-O}, respectively. This result is due to \cite{MaDh-01} and \cite{Krishna2016Persistence} using different proofs. We give a proof here in Lemma~\ref{lem:trivialcases} below in order to make this paper fully self-contained.
\item We further remark that the result for the symmetric case $a=1$ was obtained already in \cite{MaDh-01}, see their equations (35) and (36).
\item \label{item:remarkmainpe} Of great interest in the study of persistence probabilities is typically the exponential decay rate, also called persistence exponent. In all examples in Theorem~\ref{thm:main}, the persistence exponent can be obtained as the smallest zero $z_0=:1/\lambda$ of the de\-no\-mi\-na\-tor of the generating function. For example, in the blue region, let $z_0=:1/\lambda$ be the smallest zero $z$ of the equation $E(\theta,\frac{-z}{1+a})=0$. Then it follows from Theorem~\ref{thm:main} that
$$
p_n^a(\theta) \sim \frac{(1+a) E(\theta,\frac{a}{\lambda(1+a)})}{E(\theta,\frac{-\theta}{\lambda(1+a)})} \cdot \lambda^{n+2},\qquad \text{as $n\to\infty$}.
$$
\item Note that, in the orange region \ref{region-O}, the generating function of the persistence probabilities \eqref{eqn:darkorangegfprime} is piecewise rational; in the light orange region ($\theta>1$, $a\in[-1/\theta,0)$), it is even piecewise polynomial. Contrary, in the blue, green, and yellow region, respectively, the generating function consists of a combination of the (transcendent)\ deformed exponential functions $E(.,.)$ with different arguments. This type of classification of a probabilistic problem—based on the algebraic nature (rational vs.\ transcendental) of the generating function—recalls a combinatorial model that has been extensively studied in the literature over the past two decades: walks in the quarter plane; see \cite{BMMi-10}. In that context, the central question was to classify the transition weights according to whether the counting generating function is rational, algebraic, or transcendental.
\item In the blue region, see \eqref{eqn:genfunctiongenaprima}, and in the green region, see \eqref{eqn:greengfprime}, one can express the persistence probabilities $p_n^a(\theta)$ explicitly in terms of purely combinatorial quantities, see Corollaries~\ref{cor:combinatorialformulaa0theta01}, \ref{cor:recurrencezeropositivethetaless1}, \ref{cor:largethetatosmallthetaa}, and \ref{cor:greencombrepresentation}.
\end{enumerate}

\paragraph{Related work.} Let us mention some related papers. We already cited \cite{MaDh-01} that deals with the symmetric case $a=1$ and which was the starting point of our work. In \cite{AuMuZe-21}, general Markov chains and their non-exit probabilities from sets are studied. A particular case are moving average processes (with general innovation distribution and general order), for which the persistence exponent is characterized as the leading eigenvalue of an eigenvalue equation. This eigenvalue equation is studied with perturbation techniques in \cite{aurzadabothe} for the standard normal innovation distribution. A further related work is \cite{Krishna2016Persistence}, where estimates are given for the Gaussian case and the result of the present paper for the special case $\theta=1$ appears. See \cite{GoLu-20} for an analysis of the records for the moving average of a time series, in particular for uniform innovations.

We further mention \cite{AlBoRaSi-23}, which treat the persistence probabilities of order one autoregressive processes with uniform innovations using similar techniques. The same question for autoregressive processes is treated in \cite{larralde04,novikov08a,novikov08b,baumgarten14,kettner19,HiKoWa-20,AuMuZe-21,dehikowa-22,VyWa-23} with different methods. Finally, we mention that we deal with zeros of the deformed exponential function in some cases, which are also an interesting object of study in its own right, see e.g.\ \cite{Sokal-talk,WaZh-18,Ku-24} and references therein.

\paragraph{Outline.} The structure of this paper is as follows. In Section~\ref{sec:dualities}, we formulate and prove four crucial dualities, which in particular allow to compute the $p_n^a(\theta)$ in terms of $p_n^a(1/\theta)$ or $p_n^{1/a}(1/\theta)$ or $p_n^{1/a}(\theta)$, respectively. Section~\ref{sec:combinatorics} is devoted to certain combinatorial quantities that describe the series representation of $1/E(\theta,z)$ and that are used later to express the $p_n^a(\theta)$ explicitly. Finally, Sections~\ref{sec:blue}, \ref{sec:green}, \ref{sec:yellow}, and \ref{sec:orange} contain the proofs of Theorem~\ref{thm:main} in the blue, green, yellow, and orange region, respectively. Furthermore, a number of additional properties and refinements are given in those sections; in particular, the persistence probabilities can be re-written in terms of the combinatorial quantities from Section~\ref{sec:combinatorics} or as Mallows-Riordan polynomials in some cases.

\section{Dualities and trivial cases} \label{sec:dualities}

We start with four crucial dualities that allow to reduce some cases to others.

\begin{lem} \label{lem:duality1} Let $a>0$. Then for $\theta>0$, we have
\begin{equation} 
\label{eq:duality_theta/pos}
 p_n^a(\theta)=p_n^{1/a}(1/\theta).
\end{equation}
For $\theta<0$, we have 
\begin{equation}
    \label{eq:duality_theta/neg}
    p_n^a(\theta) = p_n^a(1/\theta).
\end{equation}
\end{lem}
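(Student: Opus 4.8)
The plan is to derive both identities from a single device: apply a deterministic, distribution-preserving bijection to the vector $(X_1,\dots,X_{n+1})$ and rewrite the defining event \eqref{eq:def_PP} in the new coordinates. The only two ingredients are time-reversal, $i\mapsto n+2-i$, and the affine map $x\mapsto -x/a$, which for $a>0$ sends the law $U[-a,1]$ onto $U[-1/a,1]$ (indeed $X\in[-a,1]$ iff $-X/a\in[-1/a,1]$, and affine images of uniforms are uniform). Because these maps act either by permuting the coordinates or coordinatewise by a fixed affine transformation, they preserve independence and carry the common law onto the required one; hence I only need to track how the $n$ constraints $X_{i+1}\ge\theta X_i$ transform.

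For \eqref{eq:duality_theta/neg} (the case $\theta<0$), set $Y_i:=X_{n+2-i}$, so that $(Y_1,\dots,Y_{n+1})$ is again i.i.d.\ $U[-a,1]$. Writing $X_{i+1}=Y_{n+1-i}$ and $X_i=Y_{n+2-i}$ and putting $j:=n+1-i$, the constraint $X_{i+1}\ge\theta X_i$ becomes $Y_j\ge\theta Y_{j+1}$ for $j=1,\dots,n$. Dividing by $\theta$ and using $\theta<0$ to reverse the inequality turns this into $Y_{j+1}\ge\frac1\theta Y_j$. As $j$ ranges bijectively over $\{1,\dots,n\}$, the transformed event is exactly the one defining $p_n^a(1/\theta)$, which proves \eqref{eq:duality_theta/neg}.

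For \eqref{eq:duality_theta/pos} (the case $\theta>0$) pure reversal would leave the inequality pointing the wrong way, so I combine reversal with the negation–rescaling and set $W_i:=-X_{n+2-i}/a$; then $(W_1,\dots,W_{n+1})$ is i.i.d.\ $U[-1/a,1]$. Substituting $X_{i+1}=-aW_{n+1-i}$ and $X_i=-aW_{n+2-i}$ into $X_{i+1}\ge\theta X_i$ and dividing by $-a<0$ gives $W_{n+1-i}\le\theta W_{n+2-i}$; dividing by $\theta>0$ and relabelling $j:=n+1-i$ yields $W_{j+1}\ge\frac1\theta W_j$ for $j=1,\dots,n$. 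This is precisely the event defining $p_n^{1/a}(1/\theta)$, giving \eqref{eq:duality_theta/pos}.

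The only delicate point is the sign bookkeeping: each constraint is divided once by $\theta$ and, in the second case, once by $-a$, and whether these divisions flip the inequality is exactly what distinguishes the two dualities — and explains why the first changes $a\mapsto 1/a$ while the second does not. I would therefore double-check the index ranges and the direction of every inequality; ties such as $X_{i+1}=\theta X_i$ occur with probability zero under the continuous law, so strict versus non-strict inequalities are immaterial.
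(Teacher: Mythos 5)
Your proof is correct and takes essentially the same route as the paper's: both arguments combine time reversal (exchangeability of the i.i.d.\ vector) with, for $\theta>0$, the affine map $x\mapsto -x/a$ carrying the uniform law on $[-a,1]$ to that on $[-1/a,1]$, your version merely packaging the steps as a change of variables rather than a chain of equalities of probabilities. (Your closing caveat about ties is superfluous: every step is an exact equivalence of events, so no null sets need to be discarded.)
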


\begin{proof}   Throughout, we will use $\bar\theta:=1/\theta$. Let $\theta>0$.  Then 
    \begin{align*}
        p_n^a(\theta) & = \P(X_2\geq \theta X_1,X_3\geq \theta X_2,\ldots,X_{n+1}\geq \theta X_{n})
        \\
        & = \P(\bar\theta X_2\geq X_1, \bar\theta X_3\geq  X_2,\ldots,\bar\theta X_{n+1}\geq X_{n})
        \\
        & = \P(\bar\theta X_n \geq X_{n+1}, \bar\theta X_{n-1}\geq  X_n,\ldots,\bar\theta X_{1}\geq X_{2})
        \\
        & = \P(\bar\theta X_{1}\geq X_{2},\ldots,\bar\theta X_{n-1}\geq  X_n,\bar\theta X_n \geq X_{n+1})
        \\
        & = \P(\bar\theta\,(-X_{1}/a)\leq -X_{2}/a,\ldots,\bar\theta\,(-X_{n-1}/a)\leq -X_n/a,\bar\theta\,(-X_n/a )\leq -X_{n+1}/a)
        \\
        &= p_n^{1/a}(1/\theta).
        \end{align*}
    In the last computation, we used that $\theta>0$ (second step), the fact that $(X_1,\ldots,X_{n+1})\deq (X_{n+1},\ldots,X_1)$ (third step),  the assumption that $a>0$ (fifth step), and the fact that the $-X_i/a$ are uniformly distributed on $[-1/a,1]$ (last step).

    Let $\theta<0$. Then, with a similar, but even simpler computation, we obtain: 
    \begin{align*}
        p_n^a(\theta) & = \P(X_2\geq \theta X_1,X_3\geq \theta X_2,\ldots,X_{n+1}\geq \theta X_{n})
        \\
        & = \P(\bar\theta X_2\leq X_1, \bar\theta X_3\leq  X_2,\ldots,\bar\theta X_{n+1}\leq X_{n})
        \\
        & = \P(\bar\theta X_n \leq X_{n+1}, \bar\theta X_{n-1}\leq  X_n,\ldots,\bar\theta X_{1}\leq X_{2})
        \\
        & = \P(\bar\theta X_{1}\leq X_{2},\ldots,\bar\theta X_{n-1}\leq  X_n,\bar\theta X_n \leq X_{n+1})
        \\
        &= p_n^{a}(1/\theta).
        \end{align*}
    We used that $\theta<0$ (second step) and the fact that $(X_1,\ldots,X_{n+1})\deq (X_{n+1},\ldots,X_1)$ (third step).
\end{proof}

 We stress that the next duality is valid for general continuous distributions.

\begin{lem} \label{lem:duality2} 
Let $(X_i)$ be i.i.d.\ random variables with continuous distribution and $\theta>0$. Set $p_0(\theta):=1$ and, for $n\geq 1$, $p_n(\theta)$ as in \eqref{eq:def_PP}. Then
\begin{equation} \label{eqn:hiddenduality}
    p_n(\theta)  = \sum_{k=1}^n p_{n-k}(\theta) \cdot p_{k-1}(1/\theta) (-1)^{k-1} + (-1)^{n}p_n(1/\theta),\qquad n\geq 0.
\end{equation}
Furthermore, denoting $\hat P_\theta(z):=\sum_{n=0}^\infty  p_n(\theta)z^n$, we have
\begin{align}
 \hat P_\theta(z) & = \frac{\hat P_{1/\theta}(-z)}{1- z \hat P_{1/\theta}(-z)}.
 \label{eqn:hiddendualityGF}
\end{align}
\end{lem}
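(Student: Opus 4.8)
The plan is to prove the recurrence \eqref{eqn:hiddenduality} by an inclusion--exclusion argument that exploits the nearest-neighbour structure of the defining events, and then to read off the generating-function identity \eqref{eqn:hiddendualityGF} by a routine summation. The first move is to rewrite both quantities as probabilities of monotone chains of comparisons. Writing $\bar\theta=1/\theta$ and using $\theta>0$ to multiply the inequalities, together with the reversal symmetry $(X_1,\dots,X_{n+1})\deq(X_{n+1},\dots,X_1)$ exactly as in the proof of Lemma~\ref{lem:duality1}, I would show
\[
p_n(1/\theta)=\P\Big(\textstyle\bigcap_{i=1}^n\{X_{i+1}\leq\theta X_i\}\Big),\qquad p_n(\theta)=\P\Big(\textstyle\bigcap_{i=1}^n\{X_{i+1}\geq\theta X_i\}\Big).
\]
Thus, setting $B_i:=\{X_{i+1}\geq\theta X_i\}$, the quantity $p_n(\theta)$ is the probability that all $n$ consecutive comparisons point ``up'', while $p_\ell(1/\theta)$ is the probability that $\ell$ consecutive comparisons point ``down''. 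Since the distribution is continuous and $\theta\neq 0$, we have $\P(X_{i+1}=\theta X_i)=0$, so $B_i^c$ and $\{X_{i+1}\leq\theta X_i\}$ agree up to a null event; by stationarity the probability of any block of $\ell$ consecutive events $B_i^c$ equals $p_\ell(1/\theta)$.

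Next I would apply inclusion--exclusion to $p_n(\theta)=\P(\bigcap_i(B_i^c)^c)$, giving
\[
p_n(\theta)=\sum_{S\subseteq\{1,\dots,n\}}(-1)^{|S|}\,\P\Big(\textstyle\bigcap_{i\in S}B_i^c\Big).
\]
The decisive point is that $B_i^c$ depends only on the pair $(X_i,X_{i+1})$, so two such events are independent as soon as their index sets $\{i,i+1\}$ are disjoint. Hence, decomposing $S$ into its maximal runs of consecutive integers, which are pairwise separated by at least one gap, the events attached to distinct runs involve disjoint blocks of the i.i.d.\ sequence and therefore factorize,
\[
\P\Big(\textstyle\bigcap_{i\in S}B_i^c\Big)=\prod_{\text{runs }R\text{ of }S}p_{|R|}(1/\theta).
\]

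To obtain the stated recurrence I would then sort the subsets by the position $k$ of the first gap, i.e.\ the smallest index not in $S$: positions $1,\dots,k-1$ form an initial run contributing $(-1)^{k-1}p_{k-1}(1/\theta)$, position $k$ is free, and the configuration on $\{k+1,\dots,n\}$ ranges over all subsets, contributing $p_{n-k}(\theta)$ by the very same identity (the gap at $k$ guaranteeing independence from the initial block). The remaining case $S=\{1,\dots,n\}$, with no gap at all, produces the single term $(-1)^n p_n(1/\theta)$; summing these contributions yields \eqref{eqn:hiddenduality}. Finally, \eqref{eqn:hiddendualityGF} follows by multiplying \eqref{eqn:hiddenduality} by $z^n$ and summing over $n\geq 0$: the convolution turns into $z\,\hat P_\theta(z)\,\hat P_{1/\theta}(-z)$ and the last term into $\hat P_{1/\theta}(-z)$, so $\hat P_\theta=\hat P_{1/\theta}(-z)+z\hat P_\theta\,\hat P_{1/\theta}(-z)$, which rearranges to the claimed formula.

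I expect the main obstacle to be the rigorous justification of the factorization step: one must argue carefully that maximal runs of $S$ are separated by genuine gaps, so that the corresponding coordinate blocks are disjoint and hence independent, and that a single run of length $\ell$ contributes exactly $p_\ell(1/\theta)$ — which is precisely where the reversal symmetry and the continuity of the distribution (needed to pass between strict and non-strict inequalities) both enter. The bookkeeping of the boundary term $S=\{1,\dots,n\}$ and of the degenerate case $n=0$ should also be checked against the empty-sum conventions, where the recurrence correctly reduces to $p_0(\theta)=p_0(1/\theta)=1$.
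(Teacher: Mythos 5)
Your proof is correct, and it organizes the inclusion--exclusion genuinely differently from the paper. You expand $\P\big(\bigcap_{i=1}^n B_i\big)=\sum_{S\subseteq\{1,\dots,n\}}(-1)^{|S|}\,\P\big(\bigcap_{i\in S}B_i^c\big)$ in one shot, factorize over the maximal runs of $S$ (independence of disjoint coordinate blocks), and resum by the position $k$ of the first gap, recovering the factor $p_{n-k}(\theta)$ by applying the same expansion to the suffix $\{k+1,\dots,n\}$. The paper never writes this $2^n$-term expansion: it derives the recurrence by sequentially complementing the last event, $\P(A_1\cap\dots\cap A_m)=\P(A_1\cap\dots\cap A_{m-1})-\P(A_1\cap\dots\cap A_{m-1}\cap B_m)$, peeling off a growing terminal block of $B$'s, and using at each stage the independence of the prefix from that terminal block together with the reversal identity $\P(B_i\cap\dots\cap B_n)=p_{n-i+1}(1/\theta)$, see \eqref{eqn:onlythingtoreplacehiddenduality2}; note also the mirror image, in that the paper's $(1/\theta)$-block sits at the right end whereas yours sits at the left, which is immaterial since $(X_1,\dots,X_{n+1})\deq(X_{n+1},\dots,X_1)$. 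Both arguments rest on exactly the same three ingredients --- continuity of the distribution to pass between strict and non-strict inequalities, reversal symmetry, and independence across a one-index gap --- so the recurrence and the routine generating-function step coincide. What your version buys is that the combinatorial structure (runs and gaps) is explicit and checkable in a single pass, and the ``worry'' you flag at the end is indeed the only delicate point and is easily settled: a run ending at $i$ uses $X_1,\dots,X_{i+1}$ while the next admissible index $\geq i+2$ uses variables from $X_{i+2}$ on, so the blocks are disjoint. Conversely, the paper's telescoping avoids all subset bookkeeping; note also that your full multi-run factorization is stronger than needed, since only the separation of the initial run from the rest enters the first-gap resummation.
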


\begin{proof} Let $\bar\theta:=1/\theta$.
    Set $A_i:=\{\theta X_i \leq X_{i+1}\}$ and $B_i:=\{X_i \geq \bar\theta X_{i+1}\}$. Notice that $B_i$ is the complement of $A_i$ up to a set of probability zero, because the $X_i$ have a continuous distribution. Therefore,
    \begin{align*}
        p_n(\theta) & = \P( A_1 \cap \ldots \cap A_n )
        \\
        & = \P( A_1 \cap \ldots \cap A_{n-1} ) -  \P( A_1 \cap \ldots \cap A_{n-1} \cap B_n)
        \\
        & = \P( A_1 \cap \ldots \cap A_{n-1} ) -  (\P( A_1 \cap \ldots \cap A_{n-2} \cap B_n)-\P( A_1 \cap \ldots \cap A_{n-2}\cap B_{n-1} \cap B_n))
                \\
        & = \P( A_1 \cap \ldots \cap A_{n-1} ) -  \P( A_1 \cap \ldots \cap A_{n-2} ) \cdot \P(B_n)+\P( A_1 \cap \ldots \cap A_{n-2}\cap B_{n-1} \cap B_n)
                \\
        & = p_{n-1}(\theta) -  p_{n-2}(\theta) \cdot p_1(\bar\theta) + \P( A_1 \cap \ldots \cap A_{n-3}\cap B_{n-1} \cap B_n)\\
         & \qquad \quad -\P( A_1 \cap \ldots \cap A_{n-3}\cap B_{n-2}\cap B_{n-1} \cap B_n),        
    \end{align*}
    because $A_1 \cap \ldots \cap A_{n-2}$ and $B_n$ are independent and, by exchangeability,
    \begin{align} \label{eqn:onlythingtoreplacehiddenduality2}
    \P( B_i \cap \ldots \cap B_n ) & = \P( X_i \geq \bar\theta X_{i+1}, \ldots,   X_n \geq \bar\theta X_{n+1})
    \\
    & = \P( X_{n-i+2} \geq \bar\theta X_{n-i+1}, \ldots,   X_{2} \geq \bar\theta X_{1}) = p_{n-i+1}(\bar\theta).\nonumber
    \end{align}
    Continuing this way, we obtain that
    \begin{align*}
        p_n(\theta)
        & = \sum_{k=1}^n (-1)^{k-1} p_{n-k}(\theta) \cdot p_{k-1}(\bar\theta) + (-1)^n p_n(\bar\theta),
    \end{align*}
    which is \eqref{eqn:hiddenduality}.
    Multiplying by $z^n$ and summing in $n$, we obtain
        \begin{align*}
   \hat P_\theta(z) - 1 & = \sum_{n=1}^\infty  p_n(\theta)z^n
   \\
   & = \sum_{n=1}^\infty \sum_{k=1}^n p_{n-k}(\theta) z^{n-k} \cdot z p_{k-1}(\bar\theta) (-1)^{k-1} z^{k-1} +  \sum_{n=1}^\infty (-1)^n  p_n(\bar\theta)z^n
    \\
   & = \hat P_{\theta}(z) \cdot z \hat P_{\bar\theta}(-z) + \hat P_{\bar\theta}(-z) - 1,
        \end{align*}
        as claimed in \eqref{eqn:hiddendualityGF}.
\end{proof}

Also the next lemma is valid for general continuous distributions.

\begin{lem} \label{lem:secondhiddenduality}
Let $(X_i)$ be i.i.d.\ random variables with continuous distribution and $\theta<0$. Set $p_0(\theta):=1$ and, for $n\geq 1$, $p_n(\theta)$ as in \eqref{eq:def_PP}, and
\begin{align*}
q_n(\theta) & :=\P( \theta X_1 \geq X_2, \ldots, \theta X_n \geq X_{n+1} ). 
\end{align*}
Then
\begin{equation*} 
    p_n(\theta)  = \sum_{k=1}^n p_{n-k}(\theta) \cdot q_{k-1}(1/\theta) (-1)^{k-1} + (-1)^{n}q_n(1/\theta),\qquad n\geq 0.
\end{equation*}
Furthermore, denoting $\hat P_\theta(z):=\sum_{n=0}^\infty  p_n(\theta)z^n$ and $\hat Q_\theta(z):=\sum_{n=0}^\infty  q_n(\theta)z^n$, we have
\begin{equation}
\label{eqn:hiddendualityGF2}
 \hat P_\theta(z)  = \frac{\hat Q_{1/\theta}(-z)}{1- z \hat Q_{1/\theta}(-z)}.
\end{equation}
\end{lem}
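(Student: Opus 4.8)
The plan is to follow the proof of Lemma~\ref{lem:duality2} almost verbatim, changing only the auxiliary events $B_i$ and the single identity that plays the role of \eqref{eqn:onlythingtoreplacehiddenduality2}. The assumption $\theta<0$ enters precisely at that one identity, and it is exactly what forces $q$ rather than $p$ to appear on the right-hand side. Throughout I write $\bar\theta:=1/\theta<0$.

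First I set $A_i:=\{\theta X_i\le X_{i+1}\}$, so that $p_n(\theta)=\P(A_1\cap\cdots\cap A_n)$. Since the $X_i$ have a continuous distribution, the complement of $A_i$ agrees up to a null set with $\{\theta X_i> X_{i+1}\}$; dividing by $\theta<0$ (which \emph{reverses} the inequality) I rewrite this as $B_i:=\{X_i\le\bar\theta X_{i+1}\}$. As in Lemma~\ref{lem:duality2}, for indices $j<l$ the event $A_1\cap\cdots\cap A_{j}$ depends only on $X_1,\dots,X_{j+1}$ while a block $B_{l}\cap\cdots\cap B_n$ depends only on $X_{l},\dots,X_{n+1}$, so the two are independent whenever $l\ge j+2$. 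This is the only independence used below.

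The heart of the matter, and the step I expect to be the main obstacle, is the analogue of \eqref{eqn:onlythingtoreplacehiddenduality2}, namely
\begin{equation*}
\P(B_i\cap\cdots\cap B_n)=q_{n-i+1}(\bar\theta).
\end{equation*}
To obtain it, note that $\P(B_i\cap\cdots\cap B_n)=\P(\bar\theta X_{i+1}\ge X_i,\dots,\bar\theta X_{n+1}\ge X_n)$, and apply the permutation reversing the index set $\{i,\dots,n+1\}$. Since the $X_i$ are i.i.d., this reversal preserves the joint law and turns each condition $\bar\theta X_{j+1}\ge X_j$ into $\bar\theta X_s\ge X_{s+1}$; after a shift of indices the resulting probability is exactly $q_{n-i+1}(\bar\theta)$. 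It is here that $\theta<0$ is essential: had we not divided by the negative $\theta$, the $B$-block would reduce to the $p$-pattern of Lemma~\ref{lem:duality2}, whereas the flip of the inequality is precisely what produces the $q$-pattern, with parameter $\bar\theta=1/\theta$, instead.

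With this identity in hand the remainder is identical to Lemma~\ref{lem:duality2}. Peeling off $A_n=\Omega\setminus B_n$, then $A_{n-1}$, and so on, and using the independence noted above to split each surviving term into a product $\P(A_1\cap\cdots\cap A_{n-k})\cdot\P(B_{n-k+2}\cap\cdots\cap B_n)=p_{n-k}(\theta)\,q_{k-1}(\bar\theta)$, the telescoping inclusion--exclusion yields
\begin{equation*}
p_n(\theta)=\sum_{k=1}^n(-1)^{k-1}p_{n-k}(\theta)\,q_{k-1}(\bar\theta)+(-1)^n q_n(\bar\theta),
\end{equation*}
which is the claimed recursion. Finally, multiplying by $z^n$, summing over $n\ge1$, and recognizing the Cauchy product, I get $\hat P_\theta(z)-1=z\hat P_\theta(z)\hat Q_{\bar\theta}(-z)+\hat Q_{\bar\theta}(-z)-1$; solving for $\hat P_\theta(z)$ then produces \eqref{eqn:hiddendualityGF2}.
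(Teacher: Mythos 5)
Your proposal is correct and follows essentially the same route as the paper's own proof: the same complementary events $B_i=\{X_i\le\bar\theta X_{i+1}\}$ obtained via the inequality flip from $\theta<0$, the same index-reversal identity $\P(B_i\cap\cdots\cap B_n)=q_{n-i+1}(\bar\theta)$ in place of \eqref{eqn:onlythingtoreplacehiddenduality2}, and the same line-by-line reuse of the telescoping and Cauchy-product steps from Lemma~\ref{lem:duality2}. You merely spell out the independence structure and the peeling argument that the paper leaves implicit; there is no gap.
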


\begin{proof}
    We proceed exactly as in the proof of Lemma~\ref{eqn:hiddenduality}. Set $\bar\theta:=1/\theta$. This time, since $\theta<0$, $A_i:=\{ \theta X_i \leq X_{i+1}\}=\{ X_i \geq \bar \theta X_{i+1}\}$, $B_i:=\{ X_i \leq \bar\theta X_{i+1}\}$. By the assumption that the $X_i$ have a continuous distribution, $B_i$ is the complement of $A_i$ up to a set of probability zero. We can now follow line-by-line the proof of Lemma~\ref{eqn:hiddenduality}. The only difference is that this time, instead of \eqref{eqn:onlythingtoreplacehiddenduality2}, we have
        \begin{align*}
    \P( B_i \cap \ldots \cap B_n ) & = \P( X_i \leq \bar\theta X_{i+1}, \ldots,   X_n \leq \bar\theta X_{n+1})
    \\
    & = \P( X_{n-i+2} \leq \bar\theta X_{n-i+1}, \ldots,   X_{2} \leq \bar\theta X_{1}) = q_{n-i+1}(\bar\theta),
    \end{align*}thereby completing the proof.
\end{proof}

Let us specify the last result for uniform distributions.

\begin{cor}
\label{cor:second_hidden_duality}
Let $\theta<0$ and $a>0$. Set $\hat P_{a,\theta}(z) := \sum_{n=0}^\infty p_n^a(\theta) z^n$. Then
\begin{equation} \label{eqn:hiddenduality2pic}
    \hat P_{a,\theta}(z)  = \frac{\hat P_{1/a,\theta}(-z)}{1-z\hat P_{1/a,\theta}(-z)}.
\end{equation}
\end{cor}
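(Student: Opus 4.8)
The plan is to apply Lemma~\ref{lem:secondhiddenduality} and then specialize the auxiliary quantity $q_n(1/\theta)$ to the uniform case. Since $\theta<0$, we have $1/\theta<0$ as well, so I cannot directly read off $\hat Q_{1/\theta}$ from the persistence generating functions. The first step is therefore to identify $q_n(1/\theta)$ for uniform innovations on $[-a,1]$. Recall that $q_n(\bar\theta)=\P(\bar\theta X_1\geq X_2,\ldots,\bar\theta X_n\geq X_{n+1})$ with $\bar\theta=1/\theta<0$. My claim is that this equals a \emph{persistence} probability for a rescaled uniform distribution, namely $q_n(1/\theta)=p_n^{1/a}(\theta)$. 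To see this, set $Y_i:=-X_i/a$; since the $X_i$ are uniform on $[-a,1]$, the $Y_i$ are uniform on $[-1/a,1]$. The event $\bar\theta X_i\geq X_{i+1}$ rewrites, after dividing by $-a<0$ and using $\theta=1/\bar\theta$, into a condition of the form $X_{i+1}\geq \theta X_i$ expressed in the $Y$ variables; carrying the sign bookkeeping through carefully should yield exactly the defining inequalities of $p_n^{1/a}(\theta)$ for innovations uniform on $[-1/a,1]$.

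Once this identification is in hand, the corollary follows by direct substitution. First I would invoke \eqref{eqn:hiddendualityGF2} from Lemma~\ref{lem:secondhiddenduality}, which gives
\begin{equation*}
\hat P_\theta(z) = \frac{\hat Q_{1/\theta}(-z)}{1-z\hat Q_{1/\theta}(-z)}
\end{equation*}
for the generating functions associated with a fixed continuous innovation law. I would then specialize this law to the uniform distribution on $[-a,1]$, writing $\hat P_{a,\theta}(z)$ for the left-hand side. By the identification of the previous paragraph, $\hat Q_{1/\theta}(z)=\sum_n q_n(1/\theta)z^n=\sum_n p_n^{1/a}(\theta)z^n=\hat P_{1/a,\theta}(z)$, so that $\hat Q_{1/\theta}(-z)=\hat P_{1/a,\theta}(-z)$. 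Substituting this into the displayed formula produces exactly \eqref{eqn:hiddenduality2pic}, completing the proof.

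The main obstacle I anticipate is the sign-and-scaling bookkeeping in showing $q_n(1/\theta)=p_n^{1/a}(\theta)$. The subtlety is that both $\theta$ and $1/\theta$ are negative, so the transformation $X_i\mapsto -X_i/a$ flips inequalities, and one must check that the net effect of the sign flip together with the change from $\bar\theta$ to $\theta$ lands precisely on the persistence inequalities rather than on the $q$-type inequalities. A clean way to organize this is to start from $\bar\theta X_i\geq X_{i+1}$, substitute $X_i=-aY_i$, divide by $-a$ (reversing the inequality to $\bar\theta Y_i\leq Y_{i+1}$), and then use $\theta\bar\theta=1$ together with $\theta<0$ to rewrite $\bar\theta Y_i\leq Y_{i+1}$ as $Y_{i+1}\geq \theta Y_i$ in the correct orientation; verifying each of these elementary steps is routine but must be done with care. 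Everything else—the appeal to Lemma~\ref{lem:secondhiddenduality} and the final substitution—is immediate.
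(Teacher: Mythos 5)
Your overall route is the same as the paper's: apply Lemma~\ref{lem:secondhiddenduality} and identify $q_n(1/\theta)$ for uniform innovations via the map $X_i\mapsto -X_i/a$. However, the step you dismiss as routine sign bookkeeping is precisely where your argument breaks. After substituting $X_i=-aY_i$ and dividing by $-a$, the inequality $\bar\theta X_i\geq X_{i+1}$ becomes $\bar\theta Y_i\leq Y_{i+1}$, i.e.\ $Y_{i+1}\geq\bar\theta Y_i$; at this point you have shown $q_n(1/\theta)=p_n^{1/a}(1/\theta)$ --- the persistence probability with coupling parameter $1/\theta$, \emph{not} $\theta$. Your proposed rewriting of $\bar\theta Y_i\leq Y_{i+1}$ as $Y_{i+1}\geq\theta Y_i$ ``using $\theta\bar\theta=1$'' is false as an identity of events: multiplying through by $\theta<0$ yields $Y_i\geq\theta Y_{i+1}$, with the indices \emph{transposed}, which is a different event. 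Concretely, for $\theta=-2$ (so $\bar\theta=-1/2$), $Y_i=-1$, $Y_{i+1}=1$, one has $\bar\theta Y_i=1/2\leq 1$, yet $Y_{i+1}=1<2=\theta Y_i$. So no pointwise manipulation can land you on the inequalities defining $p_n^{1/a}(\theta)$; the equality of the two probabilities is distributional, not algebraic.

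The missing ingredient is the time-reversal duality \eqref{eq:duality_theta/neg} of Lemma~\ref{lem:duality1}: since $\theta<0$, $p_n^{1/a}(1/\theta)=p_n^{1/a}(\theta)$, proved by exchangeability, i.e.\ replacing $(Y_1,\ldots,Y_{n+1})$ by $(Y_{n+1},\ldots,Y_1)$ --- equivalently, the transposed event $\bigcap_{i}\{Y_i\geq\theta Y_{i+1}\}$ that your substitution actually produces has, after reversal, the same probability as $\bigcap_{i}\{Y_{i+1}\geq\theta Y_i\}$. This is exactly how the paper proceeds: it computes $q_n(1/\theta)=p_n^{1/a}(1/\theta)$ via the $-X_i/a$ substitution and then invokes \eqref{eq:duality_theta/neg}. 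Once you add that one citation (or reproduce its reversal argument), the remainder of your proposal --- the appeal to \eqref{eqn:hiddendualityGF2} and the substitution $\hat Q_{1/\theta}(-z)=\hat P_{1/a,\theta}(-z)$ into the generating-function identity --- is correct and coincides with the paper's proof.
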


\begin{proof} We use Lemma~\ref{lem:secondhiddenduality} and observe that
\begin{align*}
q_n(\theta) & =\P( \bar\theta X_1 \geq X_2, \ldots, \bar\theta X_n \geq X_{n+1} )
\\
&= \P( \bar\theta (-X_1/a) \leq (-X_2/a), \ldots, \bar\theta (-X_n/a) \geq (-X_{n+1}/a) )
\\
& = p_n^{1/a}(\bar \theta) = p_n^{1/a}(\theta),
\end{align*}
where we used that the $-X_i/a$ are uniform on $[-1/a,1]$ (in the third step) and the duality \eqref{eq:duality_theta/neg} in the last step. Therefore, the generating function on the right-hand side in \eqref{eqn:hiddendualityGF2} becomes $\hat P_{1/a,\theta}$, while the generating function on the left-hand side in \eqref{eqn:hiddendualityGF2} is $\hat P_{a,\theta}$, as claimed.    
\end{proof}

After treating the crucial dualities, we mention a few trivial cases. Note that Lemma~\ref{lem:trivialcases}\ref{item-a}\ref{item-i} proves the result claimed in the white case of Theorem~\ref{thm:main}.

\begin{lem} \label{lem:trivialcases}
\begin{enumerate}[label={\rm (\alph{*})},ref={\rm (\alph{*})}]
    \item\label{item-a}
Let $a\in(-1,0]$.
\begin{enumerate}[label={\rm (\roman{*})},ref={\rm (\roman{*})}]
        \item\label{item-i} For $\theta\leq -a$ we have $p_n^a(\theta)=1$ for all $n\geq 0$.
        \item\label{item-ii} For $\theta\geq -1/a$ we have $p_n^a(\theta)=0$ for all $n\geq 1$.
    \end{enumerate}
\item\label{item-b}
    For any $a\in(-1,\infty)$ and $\theta=1$ we have 
\begin{align*}
    p_n^a(1) & = \frac{1}{(n+1)!}.
    \end{align*}
\end{enumerate}
\end{lem}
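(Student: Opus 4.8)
The plan is to prove the three claims separately, each by a direct probabilistic argument that uses only the support of the uniform distribution on $[-a,1]$.

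\textbf{Part \ref{item-a}\ref{item-i}.} I would show that the constraint $X_{i+1}\geq \theta X_i$ is \emph{automatically satisfied} whenever $0\leq \theta\leq -a$ and the $X_i$ live in $[-a,1]$. Here $a\in(-1,0]$, so $-a\in[0,1)$ and $\theta\in[0,-a]$. The worst case for the inequality $X_{i+1}\geq\theta X_i$ is $X_{i+1}$ as small as possible and $X_i$ as large as possible, i.e.\ $X_{i+1}=-a$ and $X_i=1$, giving the requirement $-a\geq \theta\cdot 1=\theta$, which holds precisely by the hypothesis $\theta\leq -a$. More carefully, for $\theta\geq 0$ one has $\theta X_i\leq \theta\cdot 1=\theta\leq -a\leq X_{i+1}$ for every realisation, so the event in \eqref{eq:def_PP} has probability one and $p_n^a(\theta)=1$.

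\textbf{Part \ref{item-a}\ref{item-ii}.} Here I would argue that the constraints become \emph{incompatible}, so the event is null. For $\theta\geq -1/a$ (with $a\in(-1,0)$, so $-1/a>1$) and using $X_{i+1}\geq\theta X_i$: if any $X_i$ is positive then $\theta X_i\geq (-1/a)X_i$ can exceed $1$, which is impossible for $X_{i+1}\leq 1$; conversely constraints force the $X_i$ onto a lower-dimensional set. The clean way is to chain the inequalities. Suppose the event holds; I would combine $X_2\geq\theta X_1$ with the support bound to derive that with probability one no configuration survives for $n\geq 1$: e.g.\ the single inequality $X_2\geq\theta X_1$ already forces $X_1$ into a set of measure zero once $\theta\geq -1/a$, because $X_2\leq 1$ and $\theta X_1\leq X_2\leq 1$ forces $X_1\leq 1/\theta\leq -a$, while on the positive side $X_1>0$ is then incompatible except on a null set; pushing this through the chain (or just the first coordinate) yields $p_n^a(\theta)=0$ for $n\geq 1$. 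I expect the bookkeeping of which coordinate is pinned to be the only mildly delicate point, but it reduces to a one-variable volume computation.

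\textbf{Part \ref{item-b}.} For $\theta=1$ the event is $\{X_2\geq X_1,\dots,X_{n+1}\geq X_n\}$, i.e.\ $\{X_1\leq X_2\leq\cdots\leq X_{n+1}\}$: the $(X_i)$ are \emph{increasing}. Since the $X_i$ are i.i.d.\ continuous (uniform on $[-a,1]$), they are almost surely distinct and exchangeable, so each of the $(n+1)!$ orderings of $(X_1,\dots,X_{n+1})$ is equally likely; exactly one of them is the increasing order. Hence $p_n^a(1)=1/(n+1)!$. This is independent of $a$, which matches the remark that this borderline value agrees across the blue and orange regions. The only thing to note is that continuity (guaranteeing ties have probability zero) is what makes the $(n+1)!$ orderings equiprobable, so no explicit volume integral is needed.

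The main obstacle, if any, is Part \ref{item-a}\ref{item-ii}: one must be careful that it is the event for \emph{every} $n\geq 1$ that is null, and isolate the correct pinned coordinate; I would handle this by showing the very first constraint already forces a measure-zero event under $\theta\geq -1/a$, after which monotonicity of the constraint set in $n$ gives the result for all larger $n$ at once.
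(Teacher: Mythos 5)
Your overall strategy is the paper's: direct support arguments for part \ref{item-a} and exchangeability for part \ref{item-b}. Parts \ref{item-a}\ref{item-ii} and \ref{item-b} are fine. In \ref{item-a}\ref{item-ii}, your chain $\theta X_1\leq X_2\leq 1$ forces $X_1\leq 1/\theta\leq -a$, while the support gives $X_1\geq -a$, so the first constraint alone is a null event (empty if $\theta>-1/a$, the point $\{X_1=-a\}$ if $\theta=-1/a$), and monotonicity of the events in $n$ then gives $p_n^a(\theta)=0$ for all $n\geq 1$; that is correct, though your phrase about ``the positive side $X_1>0$'' obscures that it is the support bound $X_1\geq -a$, not positivity, that does the pinning. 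The paper's version is the mirror image of yours and slightly cleaner: $\theta X_i\geq(-1/a)X_i\geq(-1/a)(-a)=1\geq X_{i+1}$ for \emph{every} $i$, so each individual constraint is a.s.\ violated, with no bookkeeping about which coordinate is pinned. Part \ref{item-b} is verbatim the paper's argument (a.s.\ distinct values, all $(n+1)!$ orderings equiprobable, one of them increasing).

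There is, however, one genuine gap in part \ref{item-a}\ref{item-i}: you only treat $0\leq\theta\leq -a$, but the lemma asserts $p_n^a(\theta)=1$ for \emph{all} $\theta\leq -a$, which (since $-a\geq 0$) includes every $\theta<0$. Your ``worst case'' heuristic, and the bound $\theta X_i\leq\theta\cdot 1$, silently use $\theta\geq 0$ and fail for negative $\theta$ (there the extreme case is $X_i$ minimal, not maximal). The missing case is a one-liner, and it is exactly how the paper completes the proof: since $a\in(-1,0]$ the support $[-a,1]$ lies in $[0,\infty)$, so $X_i\geq 0$ a.s., and for $\theta\leq 0$ one has $\theta X_i\leq 0\leq -a\leq X_{i+1}$ for every realisation, whence $p_n^a(\theta)=1$. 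With that case added, your proof matches the paper's in full.
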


Let us stress that the result in \ref{item-b} for $\theta=1$ (and in fact also its proof)\ is valid for general continuous distributions. It was first proved by \cite{MaDh-01} (see their (28)) and re-proved in \cite{Krishna2016Persistence} (see their Example 6). We mention the proof here for completeness.

\begin{proof}
First, we prove part \ref{item-i} of  \ref{item-a}.
   Let $a\in(-1,0]$ and $0\leq \theta\leq -a$. Then for any $i$
   $$
   \theta X_i \leq \theta \leq -a \leq X_{i+1},
   $$
   so that $p_n^a(\theta)=1$. On the other hand, let $a\in(-1,0]$ and $\theta\leq 0$. Then for any $i$
   $$
   \theta X_i \leq 0 \leq -a \leq X_{i+1},
   $$
   so that $p_n^a(\theta)=1$.
   
Next, we prove part \ref{item-ii} of  \ref{item-a}. If $a\in(-1,0)$ and $\theta\geq -1/a$ we have for any $i$
   $$
   \theta X_i \geq  \frac{-1}{a} \, X_i \geq \frac{-1}{a} \cdot (-a) = 1 \geq  X_{i+1},
   $$
   so that $p_n^a(\theta)=0$.
   
Let us finally show part  \ref{item-b}. Note that for any permutation $\pi$ of the set $\{1,\ldots,n+1\}$, by exchangeability,
$$
\mathbb P(X_{\pi(1)} \leq X_{\pi(2)} \leq \ldots\leq  X_{\pi(n)} \leq X_{\pi(n+1)}) = \mathbb P(X_1 \leq X_2 \leq \ldots \leq X_n \leq X_{n+1}) = p_n^a(1).
$$
Summing this over all $\pi$ and simplifying (and using the fact that the distribution of $(X_i)$ is continuous) immediately implies the claim.\end{proof}

Let us remark that the statement in Lemma~\ref{lem:trivialcases}\ref{item-a}\ref{item-ii} can be obtained from \ref{item-ii}, by the duality \eqref{eqn:hiddenduality}. We, however, prefer the more direct and simple proof.

\section{A combinatorial lemma} \label{sec:combinatorics}

In some of the cases that we treat, expressions of the type $\frac{1}{E(\theta,z)}$ appear, where $E$ is the deformed exponential function, cf.\ \eqref{eqn:genfunctiongenaprima}, \eqref{eq:formuma_pn_a<1_-1<theta<0_bisprime}, and \eqref{eqn:greengfprime}. The main lemma of this section, Lemma~\ref{lem:comblem}, which may be of independent interest, gives a purely combinatorial description of the terms in the series expansion of $\frac{1}{E(\theta,z)}$ at $z=0$.

For this purpose, we need a notion to describe certain combinatorial quantities.

\begin{defn} Fix an integer $\ell\geq 1$. An $\ell$-profile is a vector $\br=(r_1,\ldots,r_\ell)$ with $r_i\geq 0$ and $\sum_{i=1}^\ell i r_i = \ell$.
\end{defn}

Let us mention a few examples: The only $1$-profile is $\br=(1)$. The only two $2$-profiles are $\br=(2,0)$ and $\br=(0,1)$. The only three $3$-profiles are $\br=(3,0,0)$, $\br=(1,1,0)$, and $\br=(0,0,1)$.

\begin{defn} Fix $k\geq 1$ and $\ell\geq 1$. For an integer vector $\bn=(n_1,\ldots,n_k)$ with $n_i\geq 0$ and $n_1+\ldots+n_k=\ell$, we define the $\ell$-profile of $\bn$ by
\begin{equation*}
r(\bn):=(r_1(\bn),\ldots,r_\ell(\bn)),\qquad\text{where}\qquad r_i(\bn):=|\{ j\, |\, n_j = i \}| = \sum_{j=1}^k \ind_{n_j=i}.
\end{equation*}
\end{defn}

Note that $k-\sum_{i=1}^\ell r_i(\bn)$ is the number of zeros in the integer vector $(n_1,\ldots,n_k)$. For an $\ell$-profile $\br$, set
$$
w_k(\br):=\sum_{n_1+\ldots+n_k = \ell, n_i\geq 0, r(\bn)=\br} 1.
$$
This is the number of distinct ways to obtain the $\ell$-profile $\br$ by using integer vectors $\bn$ of length $k$.
A simple combinatorial fact is that for any $\ell$-profile $\br=(r_1,\ldots,r_\ell)$,
\begin{equation} \label{eqn:numberwkbr}
w_k(\br) = \binom{k}{r_1,\ldots,r_\ell,k-\sum_{i=1}^\ell r_i} = \frac{k!}{(\prod_{j=1}^\ell r_j! ) \cdot (k-\sum_{i=1}^\ell r_i)!}.
\end{equation}


We can now state and prove the main lemma of this section.

\begin{lem}
\label{lem:comblem}
Let $\theta\in\mathbb{C}$ with $|\theta|<1$ and $z\in\mathbb{C}$. Then
\begin{equation*}
  \frac{1}{E(\theta,z)}  = \sum_{\ell=0}^\infty \phi_\ell(\theta) z^\ell,  
\end{equation*}
where $\phi_0(\theta):=1$ and
\begin{equation} \label{eqn:defphi}
\phi_\ell(\theta) := \sum_{\text{$\ell$-profile $\br$}}  \binom{\sum_{i=1}^\ell r_i}{r_1,\ldots,r_\ell}  \prod_{i=1}^\ell \left[\frac{ -\theta^{i(i-1)/2}}{i!} \right]^{r_i}.
\end{equation}
\end{lem}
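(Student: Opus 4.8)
The plan is to expand $1/E(\theta,z)$ as a geometric series and then reorganise the resulting sum over compositions according to their profiles.

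First I would write $E(\theta,z) = 1 + g(z)$, where $g(z) := \sum_{i=1}^\infty c_i z^i$ with $c_i := \theta^{i(i-1)/2}/i!$. Note that $g$ has no constant term and, since $|\theta|<1$, the function $E(\theta,\cdot)$ is entire with $E(\theta,0)=1$, so $1/E(\theta,z)$ is analytic in a neighbourhood of $0$. Equivalently, one may argue purely at the level of formal power series, where $\frac{1}{1+g} = \sum_{k=0}^\infty (-1)^k g(z)^k$ is well-defined because $g^k = O(z^k)$ makes each coefficient of $z^\ell$ a finite sum; this is the cleanest reading of the identity and I would adopt it. Extracting the coefficient of $z^\ell$ from $g(z)^k = \sum_{n_1,\ldots,n_k\geq 1}\big(\prod_{j=1}^k c_{n_j}\big) z^{n_1+\cdots+n_k}$ gives $[z^\ell]g^k = \sum_{n_1+\cdots+n_k=\ell,\, n_j\geq 1}\prod_{j=1}^k c_{n_j}$, and for $\ell\geq 1$ only the terms with $1\leq k\leq \ell$ contribute.

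The crux is then the combinatorial bookkeeping. I would group the index vectors $\bn=(n_1,\ldots,n_k)$ (all parts $\geq 1$, summing to $\ell$) by their profile $\br=r(\bn)$. Because no part is zero, such a profile is an $\ell$-profile with $\sum_{i=1}^\ell r_i = k$; the summand depends on $\bn$ only through $\br$ via $\prod_{j=1}^k c_{n_j} = \prod_{i=1}^\ell c_i^{r_i}$; and the number of vectors with a given profile is the multinomial $\binom{k}{r_1,\ldots,r_\ell}$ with $k=\sum_i r_i$. This is exactly the point where the coefficient $\binom{\sum_i r_i}{r_1,\ldots,r_\ell}$ appears rather than $w_k(\br)$: since all parts are positive there is no zero slot, so the ``length'' of the multinomial is $k=\sum_i r_i$ and not $k$ with an extra zero class. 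Writing $(-1)^k = \prod_{i=1}^\ell(-1)^{r_i}$ folds the sign into each factor, turning $c_i^{r_i}$ into $(-c_i)^{r_i} = \big[-\theta^{i(i-1)/2}/i!\big]^{r_i}$.

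Finally I would observe that summing over $k$ from $1$ to $\ell$ and, for each fixed $k$, over all $\ell$-profiles with $\sum_i r_i = k$, is precisely the same as summing over all $\ell$-profiles $\br$; this reassembles exactly the expression \eqref{eqn:defphi} for $\phi_\ell(\theta)$. The case $\ell=0$ is immediate since $E(\theta,0)=1$ gives $\phi_0=1$. The only genuine subtleties are the counting step (matching the multinomial to the number of compositions with prescribed profile) and, strictly speaking, the sense in which the identity is to be read ($z$ near $0$, or as formal power series), both of which are routine; I expect the profile-counting to be the main place where care is needed.
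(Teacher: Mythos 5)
Your proposal is correct and follows essentially the same route as the paper: expand $1/E(\theta,z)$ by the geometric series, extract the coefficient of $z^\ell$ as a sum over compositions $n_1'+\cdots+n_k'=\ell$ with positive parts, fold the sign $(-1)^k$ into the factors, and group by profile. The only difference is in the bookkeeping step: you count the length-$k$ positive compositions with profile $\br$ directly by the multinomial $\binom{k}{r_1,\ldots,r_\ell}$ with $k=\sum_i r_i$, whereas the paper first pads the compositions with zeros to vectors of length $\ell$, inserts the correction factor $1/\binom{\ell}{\ell-\kappa(\bn)}$, and then simplifies $w_\ell(\br)$ via \eqref{eqn:numberwkbr} to the same multinomial—your version of this step is, if anything, slightly more direct.
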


The statement of the lemma has to be understood in the sense of item number \ref{rem:itempowerseries} in the remarks following Theorem~\ref{thm:main}.

\begin{proof} In a first step, we use the geometric series and write $1/E(\theta,z)$ as a series in $z$ with coefficients that consist of multiple integrals:
    \begin{align}
 \frac{1}{E(\theta,z)}
       &= 
        \frac{1}{1-\sum_{n=1}^\infty\frac{-\theta^{\frac{n(n-1)}{2}}z^n}{n!}} \nonumber
       \\
       &=1+\sum_{k=1}^\infty \left(\sum_{n=1}^\infty\frac{-\theta^{\frac{n(n-1)}{2}}z^n}{n!}\right)^k\nonumber\\
        &=1+\sum_{k=1}^\infty (-1)^k\sum_{n_1=1}^\infty \cdots \sum_{n_k=1}^\infty \frac{1}{n_1!}\cdots \frac{1}{n_k!}z^{n_1+\cdots +n_k} \theta^{\frac{n_1(n_1-1)}{2}+\cdots +\frac{n_k(n_k-1)}{2}}\nonumber\\
        &= 1+ \sum_{\ell=1}^\infty z^\ell  \sum_{k=1}^\infty(-1)^k\sum_{n_1=1}^\infty \cdots\sum_{n_k=1}^\infty \mathds{1}_{n_1+\cdots+n_k=\ell}\frac{1}{n_1! \ldots n_k!}\theta^{\frac{n_1(n_1-1)}{2}+\cdots +\frac{n_k(n_k-1)}{2}}\nonumber
        \\
                &= 1+ \sum_{\ell=1}^\infty z^\ell  \sum_{k=1}^\ell(-1)^k\sum_{n_1'=1}^\ell \cdots\sum_{n_k'=1}^\ell \mathds{1}_{n_1'+\cdots+n_k'=\ell}\frac{1}{n_1'! \ldots n_k'!}\theta^{\frac{n_1'(n_1'-1)}{2}+\cdots +\frac{n_k'(n_k'-1)}{2}}.\label{eq:1/E_btc}
    \end{align}

We shall continue to compute the coefficients of the series (only). We  use the notation $\kappa(\bn)$ for the number of zeros in the vector $\bn=(n_1,\ldots,n_\ell)$, i.e.\ $\kappa(\bn):=\sum_{i=1}^\ell \ind_{n_i=0}$. Recall that $\kappa(\bn)=\ell - \sum_{i=1}^\ell r_i(\bn)$. In the following computation, the combinatorial factor $\binom{\ell}{\ell-k(\bn)}$ enters in the second step because one can choose the location of the $\ell-k(\bn)=k$ non-zeros in the vector $(n_1,\ldots,n_\ell)$ with $n_1+\ldots+n_\ell=\ell$ and $n_i\geq 0$. Having chosen those gives the vector $(n_1',\ldots,n_k')$ with $n_i'\geq 1$. This allows us to compute the coefficients:
\begin{align*}
 &
\sum_{k=1}^\ell (-1)^k \sum_{n_1'=1}^\ell \cdots \sum_{n_k'=1}^\ell \ind_{n_1'+\ldots+n_k'=\ell} \prod_{j=1}^k \frac{\theta^{n_j'(n_j'-1)2/2}}{n_j'!}
\\
&= \sum_{n_1+\ldots+n_\ell=\ell, n_i\geq 0} \frac{1}{\binom{\ell}{\ell-\kappa(\bn)}} (-1)^{\ell-\kappa(\bn)} \prod_{j=1}^\ell \frac{\theta^{n_j(n_j-1)2/2}}{n_j!}
\\
&=\sum_{\text{$\ell$-profile $\br$}} \sum_{n_1+\ldots+n_\ell=\ell, n_i\geq 0} \ind_{r(\bn)=\br} \frac{1}{\binom{\ell}{\ell-\kappa(\bn)}} (-1)^{\ell-\kappa(\bn)} \prod_{j=1}^\ell \frac{\theta^{n_j(n_j-1)2/2}}{n_j!}
\\
&= \sum_{\text{$\ell$-profile $\br$}} \sum_{n_1+\ldots+n_\ell=\ell, n_i\geq 0} \ind_{r(\bn)=\br} \frac{1}{\binom{\ell}{\sum_{i=1}^\ell r_i}}  (-1)^{\sum_{i=1}^\ell r_i} \prod_{i=1}^\ell \frac{(\theta^{i(i-1)/2})^{r_i}}{i!^{r_i}}
\\
&=\sum_{\text{$\ell$-profile $\br$}} \frac{1}{\binom{\ell}{\ell-\sum_{i=1}^\ell r_i}}  \prod_{i=1}^\ell \frac{(-1)^{r_i} (\theta^{i(i-1)/2})^{r_i}}{i!^{r_i}} \sum_{n_1+\ldots+n_\ell=\ell, n_i\geq 0} \ind_{r(\bn)=\br}
\\
&=\sum_{\text{$\ell$-profile $\br$}} \frac{1}{\binom{\ell}{\ell-\sum_{i=1}^\ell r_i}}  \prod_{i=1}^\ell \frac{(-1)^{r_i} (\theta^{i(i-1)/2})^{r_i}}{i!^{r_i}} \cdot w_\ell(\br)
\\
&= \sum_{\text{$\ell$-profile $\br$}} \frac{(\ell-\sum_{i=1}^\ell r_i)!(\sum_{i=1}^\ell r_i)! }{\ell!}   \prod_{i=1}^\ell \frac{(-1)^{r_i} (\theta^{i(i-1)/2})^{r_i}}{i!^{r_i}} \cdot \frac{\ell!}{(\prod_{j=1}^\ell r_j! ) \cdot (\ell-\sum_{i=1}^\ell r_i)!}
\\
&= \sum_{\text{$\ell$-profile $\br$}}  \binom{\sum_{i=1}^\ell r_i}{r_1,\ldots,r_\ell}  \prod_{i=1}^\ell \left[\frac{ -\theta^{i(i-1)/2}}{i!} \right]^{r_i},
 \end{align*}
 where we used \eqref{eqn:numberwkbr} in the sixth step.
   \end{proof}

It is also possible to characterize the number of monomials of $\phi_\ell(\theta)$ in \eqref{eqn:defphi}. To that purpose, we need to introduce the following notation.
Given $\ell\geq 1$, one can associate partitions of $\ell$ as families of integers $n_1\geq \cdots \geq n_k\geq 0$ such that $n_1+\cdots +n_k=\ell$. For any given $\ell$, we define $c_\ell$ as the number of different values attained by the sum $n_1^2+\cdots +n_k^2$, when $n_1\geq \cdots \geq n_k\geq 0$ run over all partitions of $\ell$. This sequence $(c_\ell)_{\ell\geq 0}$ starts with $1, 1, 2, 3, 5, 7, 9, 13, 18, 21$; see OEIS sequence \href{https://oeis.org/A069999}{A069999} for more information.

\begin{cor}
The number of monomials in the polynomial $\phi_\ell(\theta)$ in \eqref{eqn:defphi} is given by $c_{\ell}$, the $\ell$-th term in the OEIS sequence \href{https://oeis.org/A069999}{A069999}.
\end{cor}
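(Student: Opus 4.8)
\emph{The approach.} My plan is to read off from the explicit formula \eqref{eqn:defphi} exactly which power of $\theta$ each $\ell$-profile contributes, and then to reinterpret $\ell$-profiles as integer partitions of $\ell$. Fix an $\ell$-profile $\br=(r_1,\ldots,r_\ell)$. Its summand in \eqref{eqn:defphi} is a single monomial in $\theta$, namely a nonzero rational multiple of $\theta^{d(\br)}$ with
\[
 d(\br)=\sum_{i=1}^\ell r_i\,\frac{i(i-1)}{2}=\sum_{i=1}^\ell r_i\binom{i}{2},
\]
the coefficient $\binom{\sum_i r_i}{r_1,\ldots,r_\ell}\prod_{i=1}^\ell(-1/i!)^{r_i}$ being nonzero, since the multinomial factor is a positive integer and the remaining product equals $(-1)^{\sum_i r_i}\prod_i (i!)^{-r_i}\neq 0$. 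Using the defining constraint $\sum_{i=1}^\ell i\,r_i=\ell$ of an $\ell$-profile, one rewrites $d(\br)=\tfrac12\bigl(\sum_{i=1}^\ell r_i i^2-\ell\bigr)$, so that $d(\br)$ depends on $\br$ only through $\sum_{i=1}^\ell r_i i^2$.

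The next step is to make the correspondence explicit: an $\ell$-profile $\br$ corresponds to the partition of $\ell$ in which the part $i$ occurs with multiplicity $r_i$, and conversely. Under this bijection $\sum_{i=1}^\ell r_i i^2$ is precisely the sum of the squares of the parts. Consequently two profiles contribute the same power of $\theta$ exactly when the corresponding partitions have equal sum of squared parts, so that the set of powers of $\theta$ occurring in \eqref{eqn:defphi} is in bijection with the set of values taken by $n_1^2+\cdots+n_k^2$ as $(n_1\geq\cdots\geq n_k)$ ranges over the partitions of $\ell$. By the very definition of $c_\ell$, this set has cardinality $c_\ell$. Hence $\phi_\ell(\theta)$ involves at most $c_\ell$ distinct powers of $\theta$, with equality precisely when no cancellation occurs after collecting the monomials of equal degree.

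\emph{The main obstacle.} What remains, and this is the crux, is to show that for every attained value of the sum of squared parts the corresponding coefficient of $\phi_\ell(\theta)$ is genuinely nonzero. This is not automatic: distinct partitions of $\ell$ can share the same sum of squared parts (the first instance is $\ell=6$, where $4+1+1$ and $3+3$ both give $18$, and $3+1+1+1$ and $2+2+2$ both give $12$), and the signs $(-1)^{\sum_i r_i}$ of their contributions need not agree, so a priori cancellation could drop the count below $c_\ell$. I would rule this out by exhibiting, among all partitions sharing a given sum of squared parts, a strictly dominant contribution. Writing the magnitude of the coefficient attached to $\br$ as $g(\br)=\bigl(\sum_i r_i\bigr)!\big/\bigl(\prod_i r_i!\,(i!)^{r_i}\bigr)$, one observes in the small cases that $g$ is largest for the partition with the most parts in the class, and the task is to prove that this magnitude strictly exceeds the sum of all the others, so the collected coefficient cannot vanish. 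A possibly cleaner alternative is a $p$-adic argument: letting $m$ be the largest part occurring among the competing partitions and choosing, via Bertrand's postulate, a prime $q$ close to $m$, one tracks the $q$-adic valuation of the individual coefficients to isolate a single term of strictly extremal valuation. Establishing this non-cancellation uniformly in $\ell$ is the only substantial point; once it is in place, the number of monomials of $\phi_\ell(\theta)$ equals the number of distinct powers of $\theta$, which by the previous paragraph is $c_\ell$.
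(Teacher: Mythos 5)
Your first paragraph reproduces, in more careful form, exactly the paper's argument: the paper's proof consists of reading off from the expansion \eqref{eq:1/E_btc} that the exponent of $\theta$ attached to a partition $(n_1,\ldots,n_k)$ of $\ell$ is $\sum_j n_j(n_j-1)/2=\frac12\bigl(\sum_j n_j^2-\ell\bigr)$, so that monomials are indexed by the distinct values of the sum of squared parts. Your version of this (profiles biject with partitions, $d(\br)=\frac12(\sum_i r_i i^2-\ell)$, each individual summand of \eqref{eqn:defphi} has nonzero coefficient) is correct and establishes that $\phi_\ell(\theta)$ has \emph{at most} $c_\ell$ monomials.

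The gap is the non-cancellation step, which you correctly flag as the crux but do not prove. Your two proposed strategies are left entirely unexecuted, and neither is obviously viable as sketched: the dominance claim for $g(\br)=\bigl(\sum_i r_i\bigr)!\big/\bigl(\prod_i r_i!\,(i!)^{r_i}\bigr)$ (that the extremal partition's coefficient strictly exceeds the sum of all competitors in its class) is verified only in small cases, and in the $q$-adic variant several partitions sharing the same sum of squares can also share the same number of parts of size $\geq q$, so a term of strictly extremal valuation is not guaranteed without substantial further work. Your concern is mathematically genuine: in your $\ell=6$ examples the competing contributions do carry opposite signs $(-1)^k$, and one must actually compute that the collected coefficients, $-\frac18+\frac1{36}=-\frac{7}{72}$ for the pair $\{4+1+1,\,3+3\}$ and $\frac23-\frac18=\frac{13}{24}$ for $\{3+1+1+1,\,2+2+2\}$, happen to be nonzero. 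So as written your proposal proves only the upper bound, not the stated equality. It is worth saying plainly that the paper's own two-line proof is silent on this very point—it implicitly equates the number of distinct exponents with the number of monomials, taking non-cancellation for granted—so your diagnosis is sharper than the paper's argument; but a complete proof along your lines would still require establishing, uniformly in $\ell$, that the coefficient collected at each attained exponent is nonzero, and that is missing from both your attempt and the paper.
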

\begin{proof}
This is a consequence of \eqref{eq:1/E_btc}. There are as many monomial terms in $\phi_\ell(\theta)$ as different values of the sums $\frac{n_1^2+\cdots+n_k^2}{2}$, with $(n_1,\ldots,n_k)$ being a partition of $\ell$.
\end{proof}

\section{The blue region} \label{sec:blue}

In this section, we deal with the case that either $\theta\in[0,1]$ and $a\geq 0$ or $\theta\in[-1,0]$ and $a\in[-\theta,-\frac{1}{\theta}]$.

\subsection{Generating functions}

We start with a formula for the generating function of the $(p_n^a(\theta))$.

\begin{lem}[Equation~\eqref{eqn:genfunctiongenaprima} of Theorem~\ref{thm:main}]
\label{lem:formula_pna_theta>0}
Let either $\theta\in[0,1]$ and $a\geq 0$ or $\theta\in[-1,0]$ and $a\in[-\theta,-\frac{1}{\theta}]$.
Then
\begin{equation} \label{eqn:genfunctiongena}
\sum_{n=0}^\infty  p_{n}^a(\theta) z^n
= \frac{E\left(\theta,\frac{a z}{1+a}\right) -E\left(\theta,\frac{-z}{1+a}\right) }{z  E\left( \theta,\frac{-z}{1+a}\right)}.
\end{equation}
    \end{lem}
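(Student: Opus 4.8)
The plan is to condition on the first coordinate $X_1$ and set up a backward recursion. For $n\geq 0$ and $x\in[-a,1]$ define
\[
f_n(x):=\P(X_2\geq\theta X_1,\ldots,X_{n+1}\geq\theta X_n\mid X_1=x),
\]
so that $f_0\equiv 1$ and $p_n^a(\theta)=\frac{1}{1+a}\int_{-a}^1 f_n(x)\,\dd x$. Integrating over the law of $X_2$ and using the independence of the $X_i$ gives $f_n(x)=\frac{1}{1+a}\int_{\max(-a,\theta x)}^{1} f_{n-1}(y)\,\dd y$ (for $\theta x\leq 1$). The first thing I would check is that, precisely in the two parameter ranges of the lemma, one has $\theta x\in[-a,1]$ for every $x\in[-a,1]$, so that the limits simplify to the clean form $f_n(x)=\frac{1}{1+a}\int_{\theta x}^{1} f_{n-1}(y)\,\dd y$. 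For $\theta\in[0,1]$, $a\geq0$ this reduces to $\theta\leq1$; for $\theta\in[-1,0]$ the two requirements $\theta x\geq -a$ and $\theta x\leq 1$ become exactly $a\geq-\theta$ and $a\leq-1/\theta$, which is why these are the admissible $a$.

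Next I would pass to the generating function $F(x,z):=\sum_{n\geq0}f_n(x)z^n$ (convergent for $|z|<1$ since $0\le f_n\le1$, and a polynomial in $x$ for each fixed $n$). The recursion becomes the integral equation $F(x,z)=1+\frac{z}{1+a}\int_{\theta x}^1 F(y,z)\,\dd y$. Differentiating in $x$ turns this into the pantograph-type equation $\partial_x F(x,z)=-\frac{\theta z}{1+a}\,F(\theta x,z)$. Solving with an ansatz $F(x,z)=\sum_k c_k(z)x^k$ yields the coefficient recursion $c_{k+1}=-\frac{z}{1+a}\frac{\theta^{k+1}}{k+1}c_k$, whose solution is exactly $F(x,z)=c_0(z)\,E\!\left(\theta,-\frac{\theta z}{1+a}x\right)$; here I use the defining relation $\frac{\dd}{\dd v}E(\theta,v)=E(\theta,\theta v)$. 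The constant $c_0(z)=F(0,z)$ remains to be determined.

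To fix $c_0$ and to perform the final integration, the key computational identity is $\int_0^u E(\theta,s)\,\dd s=\theta\bigl(E(\theta,u/\theta)-1\bigr)$, which again follows from $\frac{\dd}{\dd v}E(\theta,v)=E(\theta,\theta v)$ and is the mechanism that removes the spurious factor $\theta$ from the arguments. Writing $w:=-\frac{\theta z}{1+a}$ and evaluating the integral equation at the endpoint $x=1$ — where $\int_\theta^1 F(y,z)\,\dd y$ telescopes through the above identity and the prefactor $\frac{z}{1+a}\frac{\theta}{w}$ collapses to $-1$ — I expect the $E(\theta,w)$ terms to cancel, leaving $c_0(z)=1/E(\theta,-\frac{z}{1+a})$. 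Finally, integrating $F$ over $x\in[-a,1]$ with the same identity gives $\int_{-a}^1 E(\theta,wx)\,\dd x=\frac{1+a}{z}\bigl(E(\theta,\frac{az}{1+a})-E(\theta,\frac{-z}{1+a})\bigr)$, and combining this with the value of $c_0$ produces \eqref{eqn:genfunctiongena}.

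The step I expect to be most delicate is twofold: rigorously justifying the clean recursion on the whole interval in both regions (this is exactly where $a\geq-\theta$ and $a\leq-1/\theta$ enter, and a sign slip would silently break the formula), and arguing that the $x$-differentiated equation together with the single boundary relation at $x=1$ determines $F$ uniquely. For the latter I would let $R(x)$ denote the difference between the two sides of the integral equation; then $R'\equiv0$ by the pantograph equation, so $R$ is constant in $x$, and imposing $R(1)=0$ (which is what determines $c_0$) forces $R\equiv0$, legitimising the closed form for all $x$. Convergence and the precise sense of the identity are as in remark~\ref{rem:itempowerseries}.
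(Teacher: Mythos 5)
Your proposal is correct, and it takes a genuinely different route from the paper. Both arguments hinge on the same geometric observation—that $\theta x\in[-a,1]$ for all $x\in[-a,1]$ precisely in the two stated parameter ranges—but from there the paper integrates out the coordinates $x_{n+1},x_n,\ldots$ one at a time (using $H_k(x)=x^k/k!$) to obtain a convolution recursion in $n$, namely $p_n^a(\theta)=\sum_{k=1}^{n}\frac{(-1)^{k-1}\theta^{k(k-1)/2}}{k!\,(1+a)^{k}}\,p_{n-k}^a(\theta)+\frac{(-1)^n\theta^{n(n+1)/2}}{(1+a)^{n+1}}\cdot\frac{1-(-a)^{n+1}}{(n+1)!}$, and then sums against $z^n$; you instead condition on $X_1=x$, derive the pantograph equation $\partial_x F(x,z)=-\frac{\theta z}{1+a}F(\theta x,z)$ for the bivariate generating function, solve it in closed form, and integrate over $x$. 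I checked your computational identities and they are all correct: $\frac{\dd}{\dd v}E(\theta,v)=E(\theta,\theta v)$, $\int_0^u E(\theta,s)\,\dd s=\theta\bigl(E(\theta,u/\theta)-1\bigr)$, the collapse of the prefactor to $-1$, $c_0(z)=1/E\bigl(\theta,\tfrac{-z}{1+a}\bigr)$, and the final integration yielding \eqref{eqn:genfunctiongena}. Your approach buys a conceptual explanation for why the deformed exponential appears (it is the eigen-solution of the integral operator) and produces the bivariate object $F(x,z)=E\bigl(\theta,-\tfrac{\theta z}{1+a}x\bigr)/E\bigl(\theta,\tfrac{-z}{1+a}\bigr)$ as a by-product, whereas the paper's route is more elementary and yields the explicit linear recursion for the $p_n^a(\theta)$, which is reused later (e.g.\ for the combinatorial representations). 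One step in your write-up needs a small patch: the $R$-argument as stated only shows that the \emph{candidate} $c_0\,E(\theta,wx)$ satisfies the integral equation; to conclude it equals the true $F$ you need uniqueness, which does not follow from the nonlocal pantograph equation plus one boundary value alone. The clean fix is a contraction estimate: if $D$ is the difference of two solutions, then $D(x)=\frac{z}{1+a}\int_{\theta x}^{1}D(y)\,\dd y$, so $\|D\|_\infty\leq|z|\,\|D\|_\infty$ (the integration interval has length at most $1+a$), forcing $D\equiv0$ for $|z|<1$; the identity then extends as in remark~\ref{rem:itempowerseries}.
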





\begin{proof}
Let us start from the integral expression of the persistence probability:
\begin{align*}
    p_{n}^a(\theta) = \frac{1}{(1+a)^{n+1}} \int_{-a}^{1} \int_{\theta x_1}^{1} \ldots \int_{\theta x_{n}}^{1} 1 \dd x_{n+1}\ldots \dd x_2 \dd x_1.
\end{align*}
This representation holds because for any $x\in[-a,1]$ we have $\theta x\in[-a,1]$, due to the assumptions that either $\theta\in[0,1]$ and $a\geq 0$ or $\theta\in[-1,0]$ and $a\in[-\theta,-\frac{1}{\theta}]$.


We use the notation $H_k(x):=\frac{x^k}{k!}$ and recall that $H_{k+1}'=H_k$. We have
\begin{align*}
     p_{n}^a(\theta) 
     &= \frac{1}{(1+a)^{n+1}}\int_{-a}^{1}\int_{\theta x_1}^{1}\ldots \int_{\theta x_{n-1}}^{1}(1-\theta x_n)\dd x_{n}\ldots \dd x_2 \dd x_1\\
    &=\frac{1}{(1+a)} \,p_{n-1}^a(\theta)-\frac{1}{(1+a)^{n+1}}\,\theta\int_{-a}^{1}\int_{\theta x_1}^{1}\ldots \int_{\theta x_{n-1}}^{1}H_1(x_n)\dd x_{n} \ldots \dd x_2 \dd x_1\\
    &=\frac{1}{(1+a)}p_{n-1}^a(\theta)-\theta H_2(1)\frac{1}{(1+a)^{2}} p_{n-2}^a(\theta)
    \\
    &~~~~
    +\frac{1}{(1+a)^{n+1}}\theta^{1+2}\int_{-a}^{1}\int_{\theta x_1}^{1} \ldots \int_{\theta x_{n-2}}^{1}H_2(x_{n-1})\dd x_{n-1}\ldots \dd x_2 \dd x_1\\
    &=\sum_{k=1}^{n}\frac{(-1)^{k-1}}{k!}\theta^{\frac{k(k-1)}{2}} \frac{1}{(1+a)^{k}} \, p_{n-k}^a(\theta)+\frac{1}{(1+a)^{n+1}} (-1)^n \theta^{\frac{n(n+1)}{2}} \int_{-a}^1 H_n(x_1) \dd x_1
    \\
    &=\sum_{k=1}^{n}\frac{(-1)^{k-1}}{k!}\theta^{\frac{k(k-1)}{2}}\frac{1}{(1+a)^{k}}\,p_{n-k}^a(\theta)+\frac{1}{(1+a)^{n+1}}(-1)^n\theta^{\frac{n(n+1)}{2}}\frac{(1-(-a)^{n+1})}{(n+1)!},
\end{align*}
with $p_0^a(\theta)=\frac{1+a}{(1+a)^1}=1$. Let us abbreviate $\hat P(z):=\sum_{n=0}^\infty  p_{n}^a(\theta) z^n$. Taking generating functions of the last equality, we deduce that
\begin{align*}
   \hat P(z)-1 =& \sum_{n=0}^\infty  p_{n}^a(\theta) z^n - 1 
   \\
    =&\sum_{n=1}^\infty z^n \sum_{k=1}^{n}\frac{(-1)^{k-1}}{k!}\theta^{\frac{k(k-1)}{2}}\frac{1}{(1+a)^{k}}\,p_{n-k}^a(\theta)
    \\
    & +\sum_{n=1}^\infty z^n \frac{1}{(1+a)^{n+1}}(-1)^n\theta^{\frac{n(n+1)}{2}}\frac{(1-(-a)^{n+1})}{(n+1)!}
    \\
    = &\sum_{k=1}^\infty \frac{(-1)^{k-1}}{k!}\theta^{\frac{k(k-1)}{2}}\frac{z^k}{(1+a)^{k}} \sum_{n=k}^\infty z^{n-k} p_{n-k}^a(\theta)
    \\
    & +\sum_{n=1}^\infty  \frac{(-z)^n}{(1+a)^{n+1}}\theta^{\frac{n(n+1)}{2}}\frac{1}{(n+1)!}
    +\sum_{n=1}^\infty  \frac{z^n}{(1+a)^{n+1}}\theta^{\frac{n(n+1)}{2}}\frac{a^{n+1}}{(n+1)!}
        \\
    = & \left(1- E\left( \theta,\frac{-z}{1+a}\right)\right)\cdot \hat P(z)
     +\sum_{n=2}^\infty \frac{(-z)^{n-1}}{(1+a)^{n}}\frac{\theta^{\frac{n(n-1)}{2}}}{n!}
    +\sum_{n=2}^\infty \frac{ z^{n-1}}{(1+a)^{n}}\theta^{\frac{n(n-1)}{2}}\frac{a^{n}}{n!}
    \\
        = & \left(1- E\left( \theta,\frac{-z}{1+a}\right)\right)\cdot \hat P(z)
    \\
    & -z^{-1} \sum_{n=2}^\infty \frac{(-z)^{n} }{(1+a)^{n}}\frac{\theta^{\frac{n(n-1)}{2}}}{n!}+
    z^{-1} \sum_{n=2}^\infty \frac{(az)^{n} }{(1+a)^{n}}\frac{\theta^{\frac{n(n-1)}{2}}}{n!}
        \\
        = & \left(1- E\left( \theta,\frac{-z}{1+a}\right)\right)\cdot \hat P(z)
    \\
    & -z^{-1}\left[ E\left(\theta,\frac{-z}{1+a}\right) + \frac{z}{1+a} - E\left(\theta,\frac{a z}{1+a}\right) +\frac{a z}{1+a} \right]
            \\
    = & \left(1- E\left( \theta,\frac{-z}{1+a}\right)\right)\cdot \hat P(z)
     -z^{-1}\left[ E\left(\theta,\frac{-z}{1+a}\right)  - E\left(\theta,\frac{a z}{1+a}\right) \right] - 1.
\end{align*}
This gives \eqref{eqn:genfunctiongena}.
\end{proof}

As already pointed out in item \ref{item:remarkmainpe} in the remarks following Theorem~\ref{thm:main}, the determination of the persistence exponent, that is, the exponential decay rate of $(p_n(\theta))$, follows from Lemma~\ref{lem:formula_pna_theta>0}. It is directly related to the first zero of the exponential function studied in \cite{Sokal-talk,WaZh-18,Ku-24}.

\begin{cor} \label{cor:peblue} Let $\theta\in[0,1]$ and $a\geq 0$ or $\theta\in[-1,0]$ and $a\in[-\theta,-\frac{1}{\theta}]$.
 The persistence exponent $\lambda$ is given by the inverse $\lambda=1/z$ of the smallest positive root $z$ of the equation
 $
 E( \theta , \tfrac{-z}{1+a} ) = 0
 $.
\end{cor}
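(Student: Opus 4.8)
The plan is to read off the persistence exponent directly from the generating function formula established in Lemma~\ref{lem:formula_pna_theta>0}, using standard singularity analysis for power series with nonnegative coefficients. Since $(p_n^a(\theta))_{n\geq 0}$ is a sequence of probabilities, it is bounded in $[0,1]$, so the generating function $\hat P(z)=\sum_{n=0}^\infty p_n^a(\theta) z^n$ has radius of convergence at least $1$. By Pringsheim's theorem, because all coefficients are nonnegative, the dominant singularity of $\hat P$ lies on the positive real axis at $z=R$, where $R$ is the radius of convergence; and the persistence exponent $\lambda$ is precisely $1/R$, the exponential growth rate of $p_n^a(\theta)\asymp \lambda^n$ (in the sense of $\limsup \frac{1}{n}\log p_n^a(\theta) = \log\lambda = -\log R$).

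Next I would locate $R$ from the explicit formula \eqref{eqn:genfunctiongena}. The right-hand side is
$$
\hat P(z)=\frac{E\!\left(\theta,\tfrac{az}{1+a}\right)-E\!\left(\theta,\tfrac{-z}{1+a}\right)}{z\,E\!\left(\theta,\tfrac{-z}{1+a}\right)},
$$
so its singularities are the zeros of the denominator $z\,E(\theta,\tfrac{-z}{1+a})$. The point $z=0$ is a removable singularity (the numerator vanishes there as well, since $E(\theta,0)=1$ forces both $E$-terms to agree to order $z^0$, and one checks the leading behaviour is finite, consistent with $p_0^a(\theta)=1$). Hence the genuine singularities are exactly the zeros of $E(\theta,\tfrac{-z}{1+a})=0$. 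Among these, the one of smallest modulus determines $R$, and by the Pringsheim argument this smallest-modulus zero is real and positive; call it $z_0$. Therefore $\lambda=1/z_0$ where $z_0$ is the smallest positive root of $E(\theta,\tfrac{-z}{1+a})=0$, which is the claimed statement.

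The step requiring the most care is justifying that the smallest-modulus zero of the denominator is genuinely the dominant singularity of $\hat P$, rather than being cancelled by a coincident zero of the numerator. I would argue that the numerator $E(\theta,\tfrac{az}{1+a})-E(\theta,\tfrac{-z}{1+a})$ does not vanish at $z=z_0$: indeed at such a point $E(\theta,\tfrac{-z_0}{1+a})=0$, so the numerator reduces to $E(\theta,\tfrac{az_0}{1+a})$, and one must check this is nonzero. For $a\geq 0$ the argument $\tfrac{az_0}{1+a}\geq 0$ sits in a region where $E(\theta,\cdot)$ has no positive real zero before $z_0$ (the zeros of $E(\theta,\cdot)$ studied in \cite{Sokal-talk,WaZh-18,Ku-24} are understood, and the smallest positive zero in the argument $w=\tfrac{-z}{1+a}$ corresponds to negative $w$, whereas here $w=\tfrac{az_0}{1+a}\geq 0$); a short sign/ordering argument rules out cancellation. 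Consequently $z_0$ is a simple pole of $\hat P$, the coefficient asymptotics $p_n^a(\theta)\sim C\,\lambda^{n}$ follow as in item~\ref{item:remarkmainpe}, and the persistence exponent equals $\lambda=1/z_0$ as asserted. The main obstacle is thus the non-cancellation check together with the identification of the smallest-modulus zero as real positive; both are handled by Pringsheim together with the known structure of the zeros of the deformed exponential.
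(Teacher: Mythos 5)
Your overall route coincides with the paper's: the corollary is stated there as an immediate consequence of Lemma~\ref{lem:formula_pna_theta>0}, with the identification ``persistence exponent $=$ reciprocal of the smallest-modulus zero of the denominator'' taken as the standing convention of items~\ref{rem:itempowerseries} and~\ref{item:remarkmainpe} in the remarks after Theorem~\ref{thm:main}; the paper supplies no further argument. Your Pringsheim-plus-non-cancellation write-up is therefore, if anything, more careful than the source, and the Pringsheim part is sound: the radius of convergence $R$ of $\hat P$ is a positive real singularity, every singularity of the right-hand side of \eqref{eqn:genfunctiongena} is a non-cancelled zero of $z\,E(\theta,\tfrac{-z}{1+a})$ (with $z=0$ removable), and if the numerator does not vanish at the smallest positive zero $z_0$ of the denominator, then comparing the two sides as $z\uparrow z_0$ forces $R=z_0$, hence $\lambda=1/z_0$.

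The genuine gap is in the non-cancellation step for the second branch of the hypothesis, $\theta\in[-1,0]$ and $a\in[-\theta,-\tfrac1\theta]$. For $\theta\in[0,1]$, $a\geq0$ your check is complete and trivial: all coefficients of $w\mapsto E(\theta,w)$ are nonnegative with constant term $1$, so $E(\theta,\tfrac{az_0}{1+a})\geq 1>0$. But for $\theta<0$ the coefficients $\theta^{i(i-1)/2}$ alternate in sign in blocks of two, and $E(\theta,\cdot)$ genuinely can vanish at \emph{positive} arguments: e.g.\ $E(-1,w)=\cos w+\sin w$ vanishes at $w=3\pi/4$. Your parenthetical that ``the zeros of $E(\theta,\cdot)$ \dots are understood'' leans on \cite{Sokal-talk,WaZh-18,Ku-24}, which treat only $0<q<1$ (all zeros real negative); none of this covers $q<0$, and the promised ``short sign/ordering argument'' is asserted rather than given. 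To close it you would need, e.g., that $\tfrac{az_0}{1+a}$ is strictly smaller than the smallest positive zero of $E(\theta,\cdot)$; this is consistent in the extreme case $\theta=-1$, $a=1$, where $z_0=\pi/2$ and $\tfrac{az_0}{1+a}=\pi/4<3\pi/4$, but the general case requires proof. Since the paper itself passes over this point silently, your proposal is no less rigorous than the original, yet as a self-contained proof it has this concrete hole, confined to the negative-$\theta$ branch. A minor edge case worth one line: where $E(\theta,\cdot)$ has no zeros (notably $\theta=1$, $E(1,w)=e^w$, with $p_n^a(1)=\tfrac{1}{(n+1)!}$ decaying superexponentially) the ``smallest positive root'' does not exist, so the statement should be read only where the root exists.
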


\subsection{Representation in terms of Mallows-Riordan polynomials}
The next corollary shows that the persistence probabilities in the special case $a=-\theta$ can be expressed in terms of Mallows-Riordan polynomials \cite{MaRi-68}.
Define the Mallows-Riordan polynomials as the polynomials $J_n(\theta)$, $n\geq 1$, by
\begin{equation}
\label{eq:def_MR}
     \log\left(\sum_{n= 0}^\infty \theta^{n(n-1)/2}\frac{z^n}{n!} \right)=: \sum_{n=1}^\infty(\theta-1)^{n-1}J_n(\theta)\frac{z^n}{n!}.
\end{equation}
Differentiating \eqref{eq:def_MR} with respect to $z$, we obtain
\begin{equation} \label{eqn:mallowsriordanquot}
    \sum_{n=0}^\infty\frac{J_{n+1}(\theta)}{n!}z^n = \frac{\sum_{n=0}^\infty\frac{\theta^{n(n+1)/2}}{n!}(-\frac{z}{1-\theta})^n}{\sum_{n=0}^\infty\frac{\theta^{n(n-1)/2}}{n!}(-\frac{z}{1-\theta})^n},
\end{equation}
cf.\ formula (20) in \cite{AlBoRaSi-23}.

\begin{cor} \label{cor:relationtomallowsriordan1}
For all $n\geq 0$ and $\theta\in[0,1]$, $p_n^{-\theta}(\theta)=\frac{J_{n+2}(\theta)}{(n+1)!}$.
\end{cor}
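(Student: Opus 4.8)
The plan is to derive the identity by specializing the blue-region generating function of Lemma~\ref{lem:formula_pna_theta>0} to $a=-\theta$ and then recognizing the resulting ratio of deformed exponentials as the Mallows--Riordan quotient \eqref{eqn:mallowsriordanquot}. The first step is to check that the pair $(\theta,-\theta)$ is admissible for \eqref{eqn:genfunctiongena} (so that Lemma~\ref{lem:formula_pna_theta>0} may be invoked). Once this is settled, I substitute $a=-\theta$, so that $1+a=1-\theta$, $\frac{az}{1+a}=-\frac{\theta z}{1-\theta}$ and $\frac{-z}{1+a}=-\frac{z}{1-\theta}$, turning \eqref{eqn:genfunctiongena} into
\begin{equation*}
\sum_{n=0}^\infty p_n^{-\theta}(\theta)\,z^n=\frac{1}{z}\left(\frac{E(\theta,-\frac{\theta z}{1-\theta})}{E(\theta,-\frac{z}{1-\theta})}-1\right).
\end{equation*}

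The crux is to identify the ratio in the bracket. Writing out the series definition of $E$ and absorbing the extra factor $\theta^n$ into the exponent,
\begin{equation*}
E\Big(\theta,-\tfrac{\theta z}{1-\theta}\Big)=\sum_{n=0}^\infty\frac{\theta^{n(n-1)/2}}{n!}\Big(-\tfrac{\theta z}{1-\theta}\Big)^n=\sum_{n=0}^\infty\frac{\theta^{n(n+1)/2}}{n!}\Big(-\tfrac{z}{1-\theta}\Big)^n,
\end{equation*}
which is exactly the numerator on the right-hand side of \eqref{eqn:mallowsriordanquot}, while $E(\theta,-\frac{z}{1-\theta})$ is its denominator. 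Hence \eqref{eqn:mallowsriordanquot} yields
\begin{equation*}
\frac{E(\theta,-\frac{\theta z}{1-\theta})}{E(\theta,-\frac{z}{1-\theta})}=\sum_{n=0}^\infty\frac{J_{n+1}(\theta)}{n!}\,z^n.
\end{equation*}

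Finally, I would substitute this series into the displayed generating function. The constant term of the bracket cancels because $J_1(\theta)=1$, which one reads off from the $z^1$-coefficient in \eqref{eq:def_MR} (the left-hand side there contributes the monomial $z$); dividing by $z$ and re-indexing $m=n-1$ then gives $\sum_{m\geq 0}\frac{J_{m+2}(\theta)}{(m+1)!}z^m$, and comparing coefficients of $z^m$ produces the claim $p_m^{-\theta}(\theta)=\frac{J_{m+2}(\theta)}{(m+1)!}$. The computation is otherwise mechanical; I expect the only delicate points to be the argument-shift bookkeeping $\theta^{n(n-1)/2}\mapsto\theta^{n(n+1)/2}$ that matches \eqref{eqn:mallowsriordanquot}, and the verification $J_1(\theta)=1$ needed so that the division by $z$ leaves no singular term.
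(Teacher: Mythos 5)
Your route is exactly the paper's: specialize \eqref{eqn:genfunctiongena} to $a=-\theta$, recognize the resulting ratio of deformed exponentials as the Mallows--Riordan quotient \eqref{eqn:mallowsriordanquot} via the exponent shift $\theta^{n(n-1)/2}\theta^n=\theta^{n(n+1)/2}$, cancel the constant term using $J_1(\theta)=1$, divide by $z$ and compare coefficients. All of that algebra is correct and matches the paper's proof step for step.

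However, the one step you flagged and postponed---admissibility of the pair $(\theta,-\theta)$ for Lemma~\ref{lem:formula_pna_theta>0}---is not a formality, and under the hypothesis as printed it cannot be settled: for $\theta\in(0,1]$ the blue region requires $a\geq 0$, whereas $a=-\theta<0$. That pair in fact lies in the white region \ref{region-W} (equivalently, Lemma~\ref{lem:trivialcases}\ref{item-a}\ref{item-i} with $\theta\leq -a$), where $p_n^{-\theta}(\theta)\equiv 1$: the $X_i$ are uniform on $[\theta,1]$, so $\theta X_i\leq \theta\leq X_{i+1}$ almost surely. This is incompatible with the claimed formula, since e.g.\ $J_3(\theta)=\theta+2$ gives $J_3(\theta)/2!=(\theta+2)/2>1$ for $\theta\in(0,1]$, which is not even a probability. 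The point where the integral representation behind \eqref{eqn:genfunctiongena} breaks is that for $a<0$ and $\theta\in(0,1)$ one can have $\theta x\notin[-a,1]$ for $x\in[-a,1]$, so the lower integration limit $\theta x_i$ is wrong there. The pair $(\theta,-\theta)$ is admissible precisely for $\theta\in[-1,0]$, where $a=-\theta\in[0,1]$ is the left boundary of the blue condition $a\in[-\theta,-1/\theta]$; on that range your computation is complete and coincides verbatim with the paper's proof (and the result is sane: $J_3(\theta)/2\in[1/2,1]$ there, and at $\theta=0$ one checks $J_{n+2}(0)=(n+1)!$, matching $p_n\equiv 1$, so the two ranges agree at the common point). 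The paper's own proof performs the same substitution without comment, so the printed range $\theta\in[0,1]$ in the corollary appears to be a slip for $\theta\in[-1,0]$; your instinct to verify admissibility was exactly the right one---carrying it out both closes the gap in your write-up and uncovers the needed correction to the statement.
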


\begin{proof}
Using \eqref{eqn:genfunctiongena}, we find
\begin{equation}
\label{eq:case_MR}
    \sum_{n=0}^\infty p_n^{-\theta}(\theta)z^n = \frac{1}{z}\left(\frac{E(\theta,-\frac{\theta z}{1-\theta})}{E(\theta,-\frac{z}{1-\theta})} -1\right)= \frac{1}{z}\left(\frac{\sum_{n=0}^\infty\frac{\theta^{n(n-1)/2}}{n!}(-\frac{\theta z}{1-\theta})^n}{\sum_{n=0}^\infty\frac{\theta^{n(n-1)/2}}{n!}(-\frac{z}{1-\theta})^n} -1\right).
\end{equation}
Shifting the sum and comparing coefficients in \eqref{eqn:mallowsriordanquot} and \eqref{eq:case_MR}, we can conclude.
\end{proof}

Let us introduce the process $(W_n)$ by $W_0:=0$ and $W_n:=X_n+\theta W_{n-1}$, where the $(X_i)$ are i.i.d.\ uniform on $[-a,1]$. The process $(W_n)$ is referred to as an autoregressive process of order $1$ with uniform innovations. It is shown in Proposition~5.8 of \cite{AlBoRaSi-23} that
for $\theta \in [-1,\frac{a}{1+a}]$,
\begin{equation*}
\mathbb P(W_1\geq 0,\ldots,W_{n+1}\geq 0)=\frac{1}{(1+a)^{n+1}}\frac{J_{n+2}(\theta)}{(n+1)!}.
\end{equation*}
We do not have an intuitive explanation for the connection between the above formula (applying to autoregressive processes, called AR(1))\ and that of Corollary~\ref{cor:relationtomallowsriordan1} (which holds for MA(1) processes), other than through direct computation.

\subsection{Combinatorial representation}
The goal of this subsection is to represent the persistence probabilities in terms of the combinatorial quantities from Lemma~\ref{lem:comblem}.

Let us start with the case $a=0$ and $\theta\in[0,1]$, which is particularly simple.

\begin{cor}  \label{cor:combinatorialformulaa0theta01}  
 Let $\theta\in[0,1]$ and $a=0$. Then we have:
\begin{equation*}
p_{\ell-1}^0(\theta) = (-1)^\ell \phi_\ell(\theta)=(-1)^\ell\sum_{\text{$\ell$-profile $\br$}}  \binom{\sum_{i=1}^\ell r_i}{r_1,\ldots,r_\ell} \prod_{i=1}^\ell \left[ \left(  \frac{-\theta^{i(i-1)/2}}{i!} \right)^{r_i} \right],\qquad \ell\geq 1.
\end{equation*}
\end{cor}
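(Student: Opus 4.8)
The plan is to specialize the generating function formula from Lemma~\ref{lem:formula_pna_theta>0} to the case $a=0$ and recognize the right-hand side as a shifted version of the series $1/E(\theta,z)$ whose coefficients are computed in Lemma~\ref{lem:comblem}. First I would substitute $a=0$ into \eqref{eqn:genfunctiongena}. Since $E(\theta,0)=1$ (the $n=0$ term), the numerator becomes $E(\theta,0)-E(\theta,-z)=1-E(\theta,-z)$, so that
\begin{equation*}
\sum_{n=0}^\infty p_n^0(\theta)z^n = \frac{1-E(\theta,-z)}{z\,E(\theta,-z)} = \frac{1}{z}\left(\frac{1}{E(\theta,-z)}-1\right).
\end{equation*}

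Next I would apply Lemma~\ref{lem:comblem} with its argument equal to $-z$, writing $\frac{1}{E(\theta,-z)}=\sum_{\ell=0}^\infty \phi_\ell(\theta)(-z)^\ell=\sum_{\ell=0}^\infty (-1)^\ell\phi_\ell(\theta)z^\ell$. Since $\phi_0(\theta)=1$, subtracting $1$ removes the $\ell=0$ term, and dividing by $z$ shifts the index down by one:
\begin{equation*}
\sum_{n=0}^\infty p_n^0(\theta)z^n = \frac{1}{z}\sum_{\ell=1}^\infty (-1)^\ell\phi_\ell(\theta)z^\ell = \sum_{\ell=1}^\infty (-1)^\ell\phi_\ell(\theta)z^{\ell-1}.
\end{equation*}
Comparing coefficients of $z^{\ell-1}$ (equivalently, reindexing $n=\ell-1$) yields $p_{\ell-1}^0(\theta)=(-1)^\ell\phi_\ell(\theta)$ for every $\ell\geq 1$. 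Substituting the explicit expression \eqref{eqn:defphi} for $\phi_\ell(\theta)$ then gives the stated closed form.

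This argument is almost entirely formal, so I do not anticipate a serious obstacle; the only point requiring a little care is the interpretation of the identities as an equality of power series (or as holding for $|z|$ below the modulus of the smallest zero of the denominator), exactly as spelled out in item~\ref{rem:itempowerseries} of the remarks following Theorem~\ref{thm:main} and in the sentence immediately after the statement of Lemma~\ref{lem:comblem}. Since $a=0$ and $\theta\in[0,1]$ lie in the blue region, Lemma~\ref{lem:formula_pna_theta>0} applies, and since $|\theta|\leq 1$ with the relevant endpoint cases being harmless, Lemma~\ref{lem:comblem} applies as well; the coefficient extraction is then justified by uniqueness of power-series coefficients. The whole proof therefore reduces to the two displayed manipulations above together with a reference to the two lemmas.
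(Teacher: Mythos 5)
Your proof is correct and follows essentially the same route as the paper: specialize \eqref{eqn:genfunctiongena} to $a=0$ (using $E(\theta,0)=1$), expand $1/E(\theta,-z)$ via Lemma~\ref{lem:comblem}, and read off coefficients after the index shift. Your extra remarks on the power-series interpretation and the endpoint $\theta=1$ are careful touches the paper leaves implicit, but they do not change the argument.
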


\begin{proof}
    The statement follows immediately from Lemma~\ref{lem:comblem} applied to formula \eqref{eqn:genfunctiongena}, which     in the case $a=0$ becomes
\begin{equation*}
    \sum_{n=0}^\infty  p_{n}^0(\theta) z^n
= \frac{1}{z}\left(\frac{1}{E\left( \theta,-z\right)}-1\right) = \frac{1}{z}\left(\sum_{n=0}^\infty \phi_n(\theta)(-z)^n -1\right) = \sum_{\ell=0}^\infty (-1)^{\ell+1}\phi_{\ell+1}(\theta)z^\ell ,
\end{equation*}
so that the proof is complete.
\end{proof}

From Corollary~\ref{cor:combinatorialformulaa0theta01}, we can confirm that $p_{\ell-1}^0(0)=1$ for any $\ell$ (the responsible $\ell$-profile is $(\ell,0,\ldots,0)$). Subsequently, we can show that $p^0_{\ell-1}(\theta)=1-\frac{\ell-1}{2} \theta+\frac{(\ell-2)(\ell-3)}{8} \theta^2 +O(\theta^3)$ for $\theta\to 0$ (for the linear term, the responsible $\ell$-profile is $(\ell-2,1,0,\ldots,0)$, for the quadratic term $(\ell-4,2,0,\ldots,0)$). We can also obtain the monomial with the largest exponent, namely $(-1)^{\ell+1} \theta^{\ell(\ell-1)/2} \frac{1}{\ell!}$ (the responsible $\ell$-profile is $(0,\ldots,0,1)$).

On the other hand, it is not obvious to us why at $\theta=1$ we get
$$
\frac{1}{\ell!} = p_{\ell-1}^0(1) = (-1)^\ell\sum_{\text{$\ell$-profile $\br$}}  \binom{\sum_{i=1}^\ell r_i}{r_1,\ldots,r_\ell} \prod_{i=1}^\ell \left[ \left(  \frac{-1}{i!} \right)^{r_i} \right].
$$
This follows from evaluating \eqref{eqn:genfunctiongena} and combining with Corollary~\ref{cor:combinatorialformulaa0theta01}, but it is not clear how to obtain this directly. Nonetheless, it seems to be an interesting and non-trivial formula.

We can extend the combinatorial representation in Corollary~\ref{cor:combinatorialformulaa0theta01} to the whole blue region.

\begin{cor} Let $\theta\in[0,1]$ and $a\geq 0$ or $\theta\in[-1,0]$ and $a\in[-\theta,-1/\theta]$.
Then
\begin{equation*} 
p_{n}^a(\theta) =  \left(\frac{-1}{1+a}\right)^{n+1} ~\sum_{k=0}^{n+1} \phi_{n+1-k}(\theta) \frac{\theta^{\frac{k(k-1)}{2}}}{k!} (-a)^k,
\end{equation*}
where the $(\phi_\ell)$ are defined in \eqref{eqn:defphi}.
\label{cor:recurrencezeropositivethetaless1}
\end{cor}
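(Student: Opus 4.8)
The plan is to read off the coefficient of $z^n$ in the generating function \eqref{eqn:genfunctiongena} of Lemma~\ref{lem:formula_pna_theta>0} by combining it with the series expansion of $1/E(\theta,\cdot)$ furnished by Lemma~\ref{lem:comblem}, via a single Cauchy product. First I would factor the denominator out of \eqref{eqn:genfunctiongena} and write
\begin{equation*}
\sum_{n=0}^\infty p_n^a(\theta) z^n = \frac{1}{z}\left(\frac{E\left(\theta,\frac{az}{1+a}\right)}{E\left(\theta,\frac{-z}{1+a}\right)}-1\right).
\end{equation*}
Then I substitute the defining series $E\left(\theta,\frac{az}{1+a}\right) = \sum_{k\geq 0}\frac{\theta^{k(k-1)/2}}{k!}\frac{a^k}{(1+a)^k}z^k$ and, by Lemma~\ref{lem:comblem}, $\frac{1}{E\left(\theta,\frac{-z}{1+a}\right)} = \sum_{\ell\geq 0}\phi_\ell(\theta)\frac{(-1)^\ell}{(1+a)^\ell}z^\ell$, both of which are analytic near $z=0$ since $E(\theta,0)=1$.

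Next I would form the Cauchy product of these two power series. The coefficient of $z^m$ in the quotient is
\begin{equation*}
\frac{1}{(1+a)^m}\sum_{k=0}^m \frac{\theta^{k(k-1)/2}}{k!}\,a^k (-1)^{m-k}\phi_{m-k}(\theta) = \frac{(-1)^m}{(1+a)^m}\sum_{k=0}^m \phi_{m-k}(\theta)\frac{\theta^{k(k-1)/2}}{k!}(-a)^k,
\end{equation*}
where I used the elementary identity $(-1)^{m-k}a^k = (-1)^m(-a)^k$. At $m=0$ this coefficient equals $\phi_0(\theta)=1$, so subtracting $1$ removes exactly the constant term, and dividing by $z$ shifts the index down by one. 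Reindexing with $m=n+1$ and noting $(-1)^{n+1}/(1+a)^{n+1} = \bigl(-1/(1+a)\bigr)^{n+1}$ produces precisely the claimed formula.

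I do not expect any serious obstacle here; the only point needing care is the admissible range of $\theta$. Lemma~\ref{lem:comblem} is stated for $|\theta|<1$, so the identity is first obtained on the open interval. To cover the endpoints $\theta=\pm 1$ in the blue region, I would observe that, for fixed $n$ and $a$, both sides are polynomials in $\theta$: the left side because $p_n^a(\theta)$ is a volume whose integration limits $\theta x_i$ depend linearly on $\theta$ throughout the region (so the integral is polynomial in $\theta$), and the right side because each $\phi_\ell$ is by \eqref{eqn:defphi} a polynomial in $\theta$. Hence the identity extends to the endpoints by continuity. Alternatively, the whole computation can be interpreted as an identity of formal power series in $z$, in the sense of item~\ref{rem:itempowerseries} following Theorem~\ref{thm:main}.
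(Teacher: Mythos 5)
Your proof is correct, and it reaches the result by a more direct route than the paper, though both arguments ultimately rest on the same two ingredients: the generating function \eqref{eqn:genfunctiongena} and the expansion of $1/E(\theta,\cdot)$ from Lemma~\ref{lem:comblem}. The paper does not rewrite the right-hand side of \eqref{eqn:genfunctiongena} as $\frac{1}{z}\bigl(E(\theta,\frac{az}{1+a})/E(\theta,\frac{-z}{1+a})-1\bigr)$ and expand; instead it decomposes it so as to recognize its own formula at the parameters $a'=0$, $z'=z/(1+a)$, obtaining the intermediate identity
\begin{equation*}
p_{n}^a(\theta) = \left(\tfrac{1}{1+a}\right)^{n+1} \sum_{k=0}^n p_{n-k}^0(\theta)\, \frac{\theta^{\frac{k(k-1)}{2}}}{k!}\, a^k + \frac{\theta^{\frac{n(n+1)}{2}}}{(n+1)!}\left(\tfrac{a}{1+a}\right)^{n+1},
\end{equation*}
and only then substitutes $p_{\ell-1}^0(\theta)=(-1)^\ell\phi_\ell(\theta)$ from Corollary~\ref{cor:combinatorialformulaa0theta01}. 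Your single Cauchy product collapses these two steps into one; after checking your algebra (in particular $(-1)^{m-k}a^k=(-1)^m(-a)^k$ and the reindexing $m=n+1$), the computation is sound and yields exactly the stated formula. Your version has two advantages: it is shorter, and it avoids invoking the probabilistic identification of $\frac{1+a}{z}\frac{1-E(\theta,\frac{-z}{1+a})}{E(\theta,\frac{-z}{1+a})}$ with the generating function of $(p_n^0(\theta))$, which as a probabilistic statement is only valid for $\theta\in[0,1]$ (for $\theta<0$ the point $a=0$ lies in the white region where $p_n\equiv 1$); the paper's use of it for $\theta\in[-1,0]$ is harmless because only the formal series $\frac{1}{z}(1/E(\theta,-z)-1)=\sum_\ell(-1)^{\ell+1}\phi_{\ell+1}(\theta)z^\ell$ is actually needed, but your route makes this explicit. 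The paper's detour buys the intermediate formula relating $p_n^a$ to $p_n^0$, which has some independent interest. Finally, your care about the range of validity (Lemma~\ref{lem:comblem} is stated for $|\theta|<1$, extended to the endpoints by polynomiality in $\theta$ of both sides for fixed $n$ and $a$, or by reading everything as formal power series) addresses a point the paper passes over in silence, and your polynomiality argument is valid since the iterated-integral representation of $p_n^a(\theta)$ holds throughout the blue region.
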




\begin{proof} 
We have from \eqref{eqn:genfunctiongena}
\begin{align*}
\sum_{n=0}^\infty  p_{n}^a(\theta) z^n
&= \frac{E\left(\theta,\frac{a z}{1+a}\right) -E\left(\theta,\frac{-z}{1+a}\right) }{z  E\left( \theta,\frac{-z}{1+a}\right)}
\\
&
= E(\theta,\tfrac{az}{1+a}) (1+a)^{-1} \cdot \frac{1+a}{z}\frac{1-E(\theta,\frac{-z}{1+a})}{E(\theta,\frac{-z}{1+a})} + \left(E(\theta,\tfrac{az}{1+a}) -1\right)\frac{1}{z}
\\
&
=  E(\theta,\tfrac{az}{1+a}) (1+a)^{-1}\cdot 
\sum_{n=0}^\infty p_n^0(\theta)\left(\tfrac{z}{1+a}\right)^n
 + \left(E(\theta,\tfrac{az}{1+a}) -1\right)\frac{1}{z}
\\
&
=  \tfrac{1}{1+a}\sum_{k=0}^\infty \frac{\theta^{\frac{k(k-1)}{2}}}{k!}\left(\tfrac{az}{1+a}\right)^k \cdot 
\sum_{n=0}^\infty p_n^0(\theta)\left(\tfrac{z}{1+a}\right)^n
 + \left(\sum_{k=0}^\infty \frac{\theta^{\frac{k(k-1)}{2}}}{k!}\left(\tfrac{az}{1+a}\right)^k -1\right)\frac{1}{z}
\\
&
=   \tfrac{1}{1+a}\sum_{k=0}^\infty \sum_{n=0}^\infty p_n^0(\theta) \tfrac{\theta^{\frac{k(k-1)}{2}}}{k!}\left(\tfrac{a}{1+a}\right)^k  
\left(\tfrac{1}{1+a}\right)^n z^{k+n}
 +  \sum_{k=1}^\infty \frac{\theta^{\frac{k(k-1)}{2}}}{k!}\left(\tfrac{a}{1+a}\right)^{k} z^{k-1}
\\
&
=   \tfrac{1}{1+a}\sum_{k=0}^\infty \sum_{n=k}^\infty p_{n-k}^0(\theta) \frac{\theta^{\frac{k(k-1)}{2}}}{k!}\left(\tfrac{a}{1+a}\right)^k  
\left(\tfrac{1}{1+a}\right)^{n-k}  z^{n}
 +  \sum_{k=0}^\infty \frac{\theta^{\frac{k(k+1)}{2}}}{(k+1)!}\left(\tfrac{a}{1+a}\right)^{k+1} z^{k}
\\
&
=  \sum_{n=0}^\infty z^{n} \left(\tfrac{1}{1+a}\right)^{n+1} \sum_{k=0}^n p_{n-k}^0(\theta) \frac{\theta^{\frac{k(k-1)}{2}}}{k!} a^k  
  +    \sum_{n=0}^\infty \frac{\theta^{\frac{n(n+1)}{2}}}{(n+1)!}\left(\tfrac{a}{1+a}\right)^{n+1} z^{n},
    \end{align*}

where we used formula \eqref{eqn:genfunctiongena} (applied to $z'=z/(1+a)$ and $a'=0$)\ in the third step. This allows us to read off the $p_n^a(\theta)$ to get
\begin{equation*}
    p_{n}^a(\theta) =  \left(\frac{1}{1+a}\right)^{n+1} \sum_{k=0}^n p_{n-k}^0(\theta) \frac{\theta^{\frac{k(k-1)}{2}}}{k!} a^k  
  +     \frac{\theta^{\frac{n(n+1)}{2}}}{(n+1)!}\left(\frac{a}{1+a}\right)^{n+1},
\end{equation*}
and using Corollary~\ref{cor:combinatorialformulaa0theta01} shows the claim.
\end{proof}

We can also extend this by duality to the case $\theta\geq 1$ and to the case $\theta<-1$ with $-1/\theta \leq a\leq -\theta$.

\begin{cor} 
\label{cor:largethetatosmallthetaa} 
Let $(\phi_\ell)$ be as defined in \eqref{eqn:defphi}. Set $\bar\theta:=1/\theta$.
\begin{enumerate}[label={\rm (\alph{*})},ref={\rm (\alph{*})}]
    \item\label{ita}
Let $\theta\geq 1$ and $a\geq 0$. Then
\begin{equation*}
p_n^a(\theta)
=
\frac{1}{(1+a)^{n+1}} \sum_{k=0}^{n+1} \phi_{n+1-k}(\bar\theta)(-a)^{n+1-k}\frac{\bar\theta^{k(k-1)/2}}{k!}.
\end{equation*} 
\item\label{itb}
Let $\theta<-1$ with $-1/\theta \leq a\leq -\theta$. Then
\begin{equation*} 
p_n^a(\theta)
=  \left(\frac{-1}{1+a}\right)^{n+1} ~\sum_{k=0}^{n+1} \phi_{n+1-k}(\bar\theta) \frac{\bar\theta^{\frac{k(k-1)}{2}}}{k!} (-a)^k.
\end{equation*} 
\end{enumerate}
\end{cor}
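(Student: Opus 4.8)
The plan is to obtain both identities from the blue-region combinatorial formula of Corollary~\ref{cor:recurrencezeropositivethetaless1} by applying the appropriate duality of Lemma~\ref{lem:duality1} to move the coupling parameter into $[-1,1]$, where that corollary applies.

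For part \ref{ita}, I would first assume $\theta\geq 1$ and $a>0$, so that $\bar\theta=1/\theta\in(0,1]$ and $1/a>0$; the pair $(\bar\theta,1/a)$ then lies in the blue region. Using the duality \eqref{eq:duality_theta/pos} to write $p_n^a(\theta)=p_n^{1/a}(\bar\theta)$ and feeding this into Corollary~\ref{cor:recurrencezeropositivethetaless1} with parameters $\bar\theta$ and $1/a$ gives
$$
p_n^{1/a}(\bar\theta)=\left(\frac{-1}{1+1/a}\right)^{n+1}\sum_{k=0}^{n+1}\phi_{n+1-k}(\bar\theta)\,\frac{\bar\theta^{k(k-1)/2}}{k!}\,(-1/a)^k.
$$
After this, the work is purely algebraic: writing $\frac{-1}{1+1/a}=\frac{-a}{1+a}$, the product of the prefactor with $(-1/a)^k$ becomes $\frac{(-1)^{n+1+k}a^{n+1-k}}{(1+a)^{n+1}}$, which equals $\frac{(-a)^{n+1-k}}{(1+a)^{n+1}}$ once one uses $(-1)^{n+1+k}=(-1)^{n+1-k}$. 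The boundary case $a=0$ I would then recover by continuity, since both sides are continuous in $a$ on $[0,\infty)$ (the right-hand side being a polynomial in $a$ divided by $(1+a)^{n+1}$).

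For part \ref{itb}, I would take $\theta<-1$ with $-1/\theta\leq a\leq-\theta$, so that $\bar\theta=1/\theta\in(-1,0)$ and the hypothesis reads $a\in[-\bar\theta,-1/\bar\theta]$; hence $(\bar\theta,a)$ again lies in the blue region. The duality \eqref{eq:duality_theta/neg} gives $p_n^a(\theta)=p_n^a(\bar\theta)$, and Corollary~\ref{cor:recurrencezeropositivethetaless1} applied with parameters $\bar\theta$ and $a$ produces the stated identity verbatim, with no further manipulation.

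The only genuine obstacle is the sign-and-power bookkeeping in part \ref{ita}; everything hinges on the cancellation $(-1)^{n+1+k}=(-1)^{n+1-k}$ that converts the duality-transformed summand into the desired one. Part \ref{itb} requires only that one correctly identifies the transformed region, after which the formula is immediate.
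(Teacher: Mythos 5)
Your proposal is correct and matches the paper's own proof, which likewise derives both parts by combining the dualities \eqref{eq:duality_theta/pos} and \eqref{eq:duality_theta/neg} with Corollary~\ref{cor:recurrencezeropositivethetaless1}; your sign bookkeeping $\bigl(\tfrac{-1}{1+1/a}\bigr)^{n+1}(-1/a)^k=\tfrac{(-a)^{n+1-k}}{(1+a)^{n+1}}$ is exactly the computation left implicit there. Your extra continuity argument for the boundary case $a=0$ (which the duality lemma's hypothesis $a>0$ formally excludes) is a careful touch the paper omits, and your identification of the hypothesis in part \ref{itb} as precisely the blue-region condition $a\in[-\bar\theta,-1/\bar\theta]$ is accurate.
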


\begin{proof} To prove the two parts, one can use the dualities \eqref{eq:duality_theta/pos} and \eqref{eq:duality_theta/neg}, respectively, together with Corollary~\ref{cor:recurrencezeropositivethetaless1}.
\end{proof}

\section{The green region}
\label{sec:green}

In this section, we deal with the case $\theta\in[-1,0]$ and $a\geq-1/\theta$. The idea is to reduce this case to the borderline $\theta\in[-1,0]$ and $a=-1/\theta$ by conditioning the random variables to be $\geq 1/\theta$. We can then apply the formula from the blue region which also covers this borderline.


\begin{lem}[Equation~\eqref{eqn:greengfprime} of Theorem~\ref{thm:main}] \label{lem:greenlemgf} Let $\theta\in[-1,0]$ and $a\geq-1/\theta$. Then
\begin{equation} \label{eqn:greengf}
 \sum_{n=0}^\infty p_n^a(\theta) z^n = \frac{\theta a+1}{\theta(1+a)} + \frac{E(\theta,\frac{-z}{\theta(1+a)})-E(\theta,\frac{-z}{1+a})}{z E(\theta,\frac{-z}{1+a})}.
     \end{equation}
\end{lem}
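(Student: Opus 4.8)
The plan is to follow the strategy announced just before the lemma: condition all innovations to be at least $1/\theta$, which reduces the problem to the borderline case $a=-1/\theta$ already handled by the blue region (Lemma~\ref{lem:formula_pna_theta>0}), and then to correct for the degenerate term $n=0$. First I would record two elementary facts valid for $\theta\in[-1,0)$ and $a\geq-1/\theta$: since $\theta^2\leq 1$ one has $\theta\geq 1/\theta$, and the hypothesis $a\geq-1/\theta$ is exactly $1/\theta\geq-a$, so that $[1/\theta,1]\subseteq[-a,1]$ and $\mathbb P(X_i\geq 1/\theta)=\tfrac{1-1/\theta}{1+a}$.

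The crucial observation is that, for every $n\geq 1$, the persistence event $A:=\{X_2\geq\theta X_1,\ldots,X_{n+1}\geq\theta X_n\}$ forces \emph{all} innovations into $[1/\theta,1]$. Indeed, each constraint $X_{i+1}\geq\theta X_i$ together with $X_{i+1}\leq 1$ gives $\theta X_i\leq 1$, i.e.\ $X_i\geq 1/\theta$ for $i=1,\ldots,n$; and for the last variable one has $X_{n+1}\geq\theta X_n\geq\theta\geq 1/\theta$, using $X_n\leq 1$ and $\theta\geq1/\theta$. Hence, writing $G:=\bigcap_{i=1}^{n+1}\{X_i\geq 1/\theta\}$, we get $A\subseteq G$. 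Since conditioning i.i.d.\ variables on $\{X_i\geq 1/\theta\}$ produces i.i.d.\ uniforms on $[1/\theta,1]$, the conditional persistence probability $\mathbb P(A\mid G)$ is by definition $p_n^{-1/\theta}(\theta)$, and with $\mathbb P(G)=\big(\tfrac{1-1/\theta}{1+a}\big)^{n+1}$ this yields, for every $n\geq 1$,
\begin{equation*}
p_n^a(\theta)=\Big(\tfrac{1-1/\theta}{1+a}\Big)^{n+1}p_n^{-1/\theta}(\theta).
\end{equation*}

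I would then insert the blue-region formula \eqref{eqn:genfunctiongena} for $p_n^{-1/\theta}(\theta)$ — legitimate because $(a,\theta)=(-1/\theta,\theta)$ lies on the upper boundary of the blue region — and sum the resulting power series. Setting $w:=\tfrac{(1-1/\theta)z}{1+a}$, the prefactor $\tfrac{1-1/\theta}{1+a}$ combines with the $\tfrac1w$ coming from \eqref{eqn:genfunctiongena} into $\tfrac1z$, while the two arguments of $E$, after the substitutions $1+a'=(\theta-1)/\theta$ and $1-1/\theta=-(1-\theta)/\theta$, simplify to $\tfrac{-z}{\theta(1+a)}$ and $\tfrac{-z}{1+a}$. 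Thus $\sum_{n\geq 0}\big(\tfrac{1-1/\theta}{1+a}\big)^{n+1}p_n^{-1/\theta}(\theta)z^n$ equals precisely the fractional term on the right-hand side of \eqref{eqn:greengf}.

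The one step that requires care — and the only genuine obstacle — is that the reduction above holds only for $n\geq 1$: at $n=0$ there is no persistence constraint, the inclusion $A\subseteq G$ is vacuous, and $p_0^a(\theta)=1$ rather than the value $\tfrac{1-1/\theta}{1+a}$ predicted by the $n=0$ term of the summed series. I would therefore assemble the generating function as
\begin{equation*}
\sum_{n=0}^\infty p_n^a(\theta)z^n=1-\frac{1-1/\theta}{1+a}+\sum_{n=0}^\infty\Big(\tfrac{1-1/\theta}{1+a}\Big)^{n+1}p_n^{-1/\theta}(\theta)z^n,
\end{equation*}
and simplify the correction constant $1-\tfrac{1-1/\theta}{1+a}=\tfrac{\theta a+1}{\theta(1+a)}$, which is exactly the additive term in \eqref{eqn:greengf}. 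This careful book-keeping of the $n=0$ term is precisely what distinguishes the green formula from a naive rescaling of the blue one.
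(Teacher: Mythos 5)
Your proposal is correct and follows essentially the same route as the paper's own proof: conditioning on the event $\bigcap_i\{X_i\geq 1/\theta\}$ (which the persistence event forces for $n\geq 1$), identifying the conditioned variables as the boundary case $a'=-1/\theta$ of the blue region, substituting \eqref{eqn:genfunctiongena} with the same simplifications of the arguments of $E$, and accounting for the $n=0$ term via $1-\nu=\frac{\theta a+1}{\theta(1+a)}$. The only cosmetic difference is that you spell out the justification $\theta\geq 1/\theta$ and the $n=0$ correction more explicitly than the paper does.
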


\begin{proof}
We first observe that for $n\geq 1$,
\begin{equation*}
    p_n^a(\theta) = \P( \theta X_1 \leq X_2,\theta X_2 \leq X_3, \ldots, \theta X_n \leq X_{n+1} ,X_1\geq \tfrac{1}{\theta},\ldots,X_{n+1}\geq \tfrac{1}{\theta}).
\end{equation*}
Indeed, since $\theta X_i\leq X_{i+1}\leq 1$, we get $X_i\geq \frac{1}{\theta}$ for all $1\leq i\leq n$. Further, we have $X_{n+1}\geq \theta X_n\geq \theta\geq \frac{1}{\theta}$. As a consequence, conditioning on $X_i\geq \frac{1}{\theta}$, we rewrite
\begin{equation*}
    p_n^a(\theta) = \P( \theta \tilde X_1 \leq \tilde X_2,\theta \tilde X_2 \leq \tilde X_3, \ldots, \theta \tilde X_n \leq \tilde X_{n+1})\cdot \nu^{n+1},
\end{equation*}
where the $\tilde X_i$ are uniform on $[\frac{1}{\theta},1]$ and
$
    \nu = \mathbb P(X_i\geq \tfrac{1}{\theta}) = \frac{1-\frac{1}{\theta}}{1+a}$.
As it turns out, the case $a=-\frac{1}{\theta}$ is a boundary case of the blue region applicable to the $\tilde X_i$, so we can continue the computations as follows:
\begin{equation*}
    \sum_{n=0}^\infty p_n^a(\theta) z^n = 1 -\nu +\nu \sum_{n=0}^\infty p_n^{-\frac{1}{\theta}}(\theta)(\nu z)^n,
\end{equation*}
where, using Lemma~\ref{lem:formula_pna_theta>0}, 
\begin{equation*} 
\sum_{n=0}^\infty  p_{n}^{-\frac{1}{\theta}}(\theta) (\nu z)^n
= \frac{E\left(\theta,\frac{-\nu z}{\theta(1-\frac{1}{\theta})}\right) -E\left(\theta,\frac{-\nu z}{1-\frac{1}{\theta}}\right) }{\nu z  E\left(\theta,\frac{-\nu z}{1-\frac{1}{\theta}}\right)}.
\end{equation*}
Combining the last two displays exactly corresponds to \eqref{eqn:greengf}.
\end{proof}

A simple corollary is a formula for the persistence exponent.

\begin{cor} \label{cor:greenpe}
Let $\theta\in[-1,0]$ and $a\geq -1/\theta$.
 The persistence exponent $\lambda$ is given by the inverse $\lambda=1/z$ of the smallest positive root $z$ of the equation $E(\theta,\tfrac{-z}{1+a}) = 0$.
\end{cor}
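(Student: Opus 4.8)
The plan is to read off the persistence exponent directly from the generating function established in Lemma~\ref{lem:greenlemgf}, using the general principle stated in item~\ref{item:remarkmainpe} of the remarks following Theorem~\ref{thm:main}: the exponential decay rate $\lambda$ of $(p_n^a(\theta))$ is the reciprocal of the radius of convergence of the generating function $\hat P_{a,\theta}(z)=\sum_{n\geq 0} p_n^a(\theta)z^n$, which in turn equals the modulus of the smallest singularity of the right-hand side of \eqref{eqn:greengf}. Since the right-hand side is a rational combination of deformed exponentials, its only singularities are poles located at the zeros of the denominator $z\,E(\theta,\tfrac{-z}{1+a})$.

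First I would examine the right-hand side of \eqref{eqn:greengf}, namely
\[
\frac{\theta a+1}{\theta(1+a)} + \frac{E(\theta,\tfrac{-z}{\theta(1+a)})-E(\theta,\tfrac{-z}{1+a})}{z\,E(\theta,\tfrac{-z}{1+a})}.
\]
The prefactor is a constant and contributes no singularity. The candidate poles therefore come from the factor $z$ and from the zeros of $E(\theta,\tfrac{-z}{1+a})$ in the denominator. The apparent pole at $z=0$ is removable: the numerator $E(\theta,\tfrac{-z}{\theta(1+a)})-E(\theta,\tfrac{-z}{1+a})$ vanishes at $z=0$ (both deformed exponentials equal $1$ there), so the quotient extends analytically across the origin — consistent with the left-hand side being a power series with $p_0^a(\theta)=1$. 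Hence the nearest genuine singularity is the smallest-modulus zero $z_0$ of $E(\theta,\tfrac{-z}{1+a})$.

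Next I would verify that $z_0$ is a pole and not a removable singularity, i.e.\ that the numerator does not also vanish at $z_0$. At a zero $z_0$ of $E(\theta,\tfrac{-z}{1+a})$ the second deformed exponential in the numerator, $E(\theta,\tfrac{-z_0}{\theta(1+a)})$, would have to vanish as well for cancellation, but its argument is $1/\theta$ times the argument of the denominator; since $\theta\in[-1,0]$ these two arguments are generically distinct zeros of $E(\theta,\cdot)$, so no cancellation occurs and $z_0$ is a simple pole. Because $\theta\le 0$ and the innovations are genuinely random in the green region, the coefficients $p_n^a(\theta)$ are positive, so by Pringsheim's theorem the dominant singularity lies on the positive real axis; this identifies $z_0$ as the \emph{smallest positive} root of $E(\theta,\tfrac{-z}{1+a})=0$. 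Setting $\lambda=1/z_0$ and extracting the pole contribution (as in the asymptotic formula quoted in item~\ref{item:remarkmainpe}) gives $p_n^a(\theta)\sim C\lambda^{n+2}$, whence $\lambda$ is the persistence exponent.

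I expect the only real subtlety to be the cancellation check at $z_0$ together with the sign/positivity argument that pins the dominant root to the positive real axis; the removability at $z=0$ and the location of candidate poles are routine. Note in particular that the denominator is exactly the same as in the blue region (Corollary~\ref{cor:peblue}), so the persistence exponent in the green region coincides with that of the blue region — a fact consistent with the conditioning argument in the proof of Lemma~\ref{lem:greenlemgf}, which relates the green region to the borderline $a=-1/\theta$ of the blue region.
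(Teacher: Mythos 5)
Your proposal is correct and follows essentially the same route as the paper, which states Corollary~\ref{cor:greenpe} as an immediate consequence of Lemma~\ref{lem:greenlemgf} via the principle in item~\ref{item:remarkmainpe} of the remarks after Theorem~\ref{thm:main}: the decay rate is the reciprocal of the smallest-modulus zero of the denominator $z\,E(\theta,\tfrac{-z}{1+a})$, with the singularity at $z=0$ removable. Your additional checks (non-cancellation of the numerator at $z_0$, positivity of the coefficients plus Pringsheim to place the dominant singularity on the positive real axis) merely spell out details the paper leaves implicit, and your closing observation that the exponent coincides with that of the blue region is consistent with the paper's conditioning argument.
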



We can also prove a combinatorial representation of the persistence probabilities in terms of the quantities $\phi_\ell$ from \eqref{eqn:defphi}.

\begin{cor} For $\theta\in[-1,0]$ and $a\geq -1/\theta$ we have $p_0^a(\theta)=1$ and, for $n\geq 1$,
$$
p_n^a(\theta)= \frac{(-1)^{n+1}}{(1+a)^{n+1}}\, \sum_{k=0}^{n+1}  \frac{\theta^{k(k-3)/2}}{k!} \phi_{n+1-k}(\theta) ,
$$
where the $(\phi_\ell)$ are defined in \eqref{eqn:defphi}.
\label{cor:greencombrepresentation}
\end{cor}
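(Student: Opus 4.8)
The plan is to extract the coefficients of the generating function \eqref{eqn:greengf} from the green region and rewrite them using Lemma~\ref{lem:comblem}. First I would observe that the constant term $\frac{\theta a+1}{\theta(1+a)}$ only affects the coefficient of $z^0$, so it can be set aside once we confirm $p_0^a(\theta)=1$ (which holds by definition, $p_0^a(\theta):=1$). For $n\geq 1$ the relevant object is the second summand
\begin{equation*}
\frac{E\bigl(\theta,\tfrac{-z}{\theta(1+a)}\bigr)-E\bigl(\theta,\tfrac{-z}{1+a}\bigr)}{z\,E\bigl(\theta,\tfrac{-z}{1+a}\bigr)},
\end{equation*}
and I would split it as
\begin{equation*}
\frac{1}{z}\cdot\frac{E\bigl(\theta,\tfrac{-z}{\theta(1+a)}\bigr)}{E\bigl(\theta,\tfrac{-z}{1+a}\bigr)}-\frac{1}{z}.
\end{equation*}
The $-\frac{1}{z}$ piece has no effect on coefficients of $z^n$ with $n\geq 1$, so the whole task reduces to computing the $z^{n+1}$-coefficient of the quotient $E\bigl(\theta,\tfrac{-z}{\theta(1+a)}\bigr)/E\bigl(\theta,\tfrac{-z}{1+a}\bigr)$.

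Next I would apply Lemma~\ref{lem:comblem} to the denominator to write $1/E\bigl(\theta,\tfrac{-z}{1+a}\bigr)=\sum_{\ell=0}^\infty \phi_\ell(\theta)\bigl(\tfrac{-z}{1+a}\bigr)^\ell$, and expand the numerator directly from the definition of the deformed exponential, $E\bigl(\theta,\tfrac{-z}{\theta(1+a)}\bigr)=\sum_{k=0}^\infty \tfrac{\theta^{k(k-1)/2}}{k!}\bigl(\tfrac{-z}{\theta(1+a)}\bigr)^k$. Multiplying these two series is a Cauchy product; the factor $\theta^{k(k-1)/2}\theta^{-k}=\theta^{k(k-3)/2}$ is exactly the exponent appearing in the claimed formula, and the two factors of $\frac{-1}{1+a}$ combine to give $\bigl(\tfrac{-1}{1+a}\bigr)^{k+\ell}$. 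Collecting the coefficient of $z^{n+1}$ forces $k+\ell=n+1$, i.e.\ $\ell=n+1-k$, and summing over $k=0,\dots,n+1$ yields precisely
\begin{equation*}
\frac{(-1)^{n+1}}{(1+a)^{n+1}}\sum_{k=0}^{n+1}\frac{\theta^{k(k-3)/2}}{k!}\,\phi_{n+1-k}(\theta).
\end{equation*}
Since this is the coefficient of $z^{n+1}$ in the quotient, it equals the coefficient of $z^n$ in $\tfrac{1}{z}\times(\text{quotient})$, which is $p_n^a(\theta)$ for $n\geq 1$, completing the argument.

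The steps here are all routine series bookkeeping, so the only genuine subtlety is the interpretive one flagged in item~\ref{rem:itempowerseries} of the remarks: Lemma~\ref{lem:comblem} is an identity of formal power series (or valid inside the radius of convergence determined by the smallest zero of $E\bigl(\theta,\tfrac{-z}{1+a}\bigr)$), and one must be sure the Cauchy product is legitimate in the same sense. I would simply note that all manipulations are carried out at the level of formal power series, where multiplication and coefficient extraction are unambiguous, so no analytic convergence issue arises; the identity then holds for $|z|$ below the smallest modulus of a zero of the denominator exactly as in Lemma~\ref{lem:comblem}. The bookkeeping of the exponent $k(k-3)/2$ is the one place a sign or index slip could creep in, so that is the step I would double-check most carefully.
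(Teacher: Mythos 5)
Your proposal is correct and follows essentially the same route as the paper: both extract coefficients from \eqref{eqn:greengf} by rewriting $E\bigl(\theta,\tfrac{-z}{\theta(1+a)}\bigr)$ via the definition of the deformed exponential (with the exponent bookkeeping $\theta^{k(k-1)/2}\theta^{-k}=\theta^{k(k-3)/2}$), inverting the denominator with Lemma~\ref{lem:comblem}, and reading off the Cauchy product, with the constant term $\tfrac{\theta a+1}{\theta(1+a)}$ absorbed into the $n=0$ case exactly as you note. Your handling of the $-\tfrac{1}{z}$ term and the formal-power-series interpretation matches the paper's implicit treatment, so there is nothing to correct.
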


\begin{proof}
By Lemma~\ref{lem:greenlemgf} and Lemma~\ref{lem:comblem},
\begin{align*}
  \sum_{n=0}^\infty p_n^a(\theta) z^n
 &= \frac{\theta a+1}{\theta(1+a)}
 + z^{-1}\left(\frac{E(\theta,\frac{-\bar\theta z}{1+a})  }{ E(\theta,\frac{-z}{1+a})}- 1\right)
 \\
 &= \frac{\theta a+1}{\theta(1+a)}
 + z^{-1}\left( \sum_{k=0}^\infty \frac{\theta^{k(k-1)/2}}{k!}\left(\tfrac{-\bar\theta z}{1+a}\right)^k\cdot \sum_{n=0}^\infty \phi_n(\theta) \left( \tfrac{-z}{1+a}\right)^n  - 1\right)
  \\
 &=\frac{\theta a+1}{\theta(1+a)}
 + z^{-1}\left( \sum_{k=0}^\infty \frac{\theta^{k(k-1)/2}}{k!}\left(\tfrac{-\bar\theta z}{1+a}\right)^k\cdot \sum_{n=k}^\infty \phi_{n-k}(\theta) \left( \tfrac{-1}{1+a}\right)^{n-k} z^{n-k}  - 1\right)
   \\
 &= \frac{\theta a+1}{\theta(1+a)}
 + z^{-1}\sum_{n=0}^\infty \left( \sum_{k=0}^n  \frac{\theta^{k(k-1)/2}}{k!}\bar\theta^k \phi_{n-k}(\theta) \tfrac{(-1)^n}{(1+a)^n} \cdot z^n  - 1\right)
    \\
 &= \frac{\theta a+1}{\theta(1+a)}
 + \sum_{n=1}^\infty   \sum_{k=0}^n  \frac{\theta^{k(k-3)/2}}{k!} \phi_{n-k}(\theta) \tfrac{(-1)^n}{(1+a)^n} \cdot z^{n-1}. 
\end{align*}
From here, we can read off the $p_n^a(\theta)$. 
\end{proof}

\section{The yellow region}
\label{sec:yellow}
In this section, we deal with the case $a\in[0,1]$ and $\theta\in[-1,-a]$. The idea is to relate the generating function of the persistence probabilities in this region to the one in the green region via the duality \eqref{eqn:hiddenduality2pic}.


\begin{lem}[Equation~\eqref{eq:formuma_pn_a<1_-1<theta<0_bisprime} of Theorem~\ref{thm:main}] \label{lem:a01mu}
If $a\in[0,1]$ and $\theta\in[-1,-a]$, then with $\mu :=\frac{1+\frac{a}{\theta}}{1+a} $
\begin{equation}
    \label{eq:formuma_pn_a<1_-1<theta<0_bis}
    \sum_{n=0}^\infty p_n^a(\theta)z^n=
    \frac{1}{z}\left(\frac{1}{\frac{E(\theta, \frac{az}{\theta(1 + a)})}{E(\theta, \frac{az}{1 + a})}-\mu z}-1\right).
\end{equation}
\end{lem}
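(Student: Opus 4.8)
The plan is to invoke the duality of Corollary~\ref{cor:second_hidden_duality}, which for $\theta<0$ and $a>0$ reads $\hat P_{a,\theta}(z)=\hat P_{1/a,\theta}(-z)/(1-z\hat P_{1/a,\theta}(-z))$, and to feed into it the explicit green-region generating function from Lemma~\ref{lem:greenlemgf}. The first thing I would check is that the dual parameter pair $(1/a,\theta)$ genuinely lies in the green region. Assuming for the moment $a\in(0,1]$, the hypothesis $\theta\in[-1,-a]$ gives $\theta\in[-1,0]$ together with $-\theta\geq a>0$, hence $1/a\geq -1/\theta$; these are exactly the conditions $\theta\in[-1,0]$ and $a'\geq-1/\theta$ defining the green region with $a'=1/a$, so Lemma~\ref{lem:greenlemgf} applies to $\hat P_{1/a,\theta}$. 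The boundary case $a=0$ falls into the white region and yields $p_n^0(\theta)\equiv 1$ by Lemma~\ref{lem:trivialcases}\ref{item-a}\ref{item-i}, which matches the right-hand side of \eqref{eq:formuma_pn_a<1_-1<theta<0_bis} (there $\mu=1$ and the ratio of deformed exponentials is $1$, giving $1/(1-z)$), so I may restrict to $a>0$.

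Next I would write out $\hat P_{1/a,\theta}$ from \eqref{eqn:greengf} with $a$ replaced by $1/a$. Since $1+\tfrac1a=\tfrac{1+a}{a}$, the arguments of the deformed exponentials simplify to $\tfrac{-w}{\theta(1+1/a)}=\tfrac{-aw}{\theta(1+a)}$ and $\tfrac{-w}{1+1/a}=\tfrac{-aw}{1+a}$, while the constant prefactor becomes $\tfrac{\theta/a+1}{\theta(1+a)/a}=\tfrac{\theta+a}{\theta(1+a)}=\mu$. Evaluating at $w=-z$ and abbreviating $A:=E(\theta,\tfrac{az}{\theta(1+a)})$ and $B:=E(\theta,\tfrac{az}{1+a})$, the sign flip $w=-z$ turns the two negative arguments of the green formula into the positive ones appearing in the statement, and a short computation collapses the expression to $\hat P_{1/a,\theta}(-z)=\mu+\tfrac1z\bigl(1-\tfrac{A}{B}\bigr)$.

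Finally I would substitute $Q:=\hat P_{1/a,\theta}(-z)=\mu+\tfrac1z(1-A/B)$ into the duality. Writing $Q=(\mu z+1-A/B)/z$ makes the numerator transparent, while $1-zQ=A/B-\mu z$ gives the denominator, so that $Q/(1-zQ)=(\mu z+1-A/B)/\bigl(z(A/B-\mu z)\bigr)$. One then checks directly that $\tfrac1z\bigl(\tfrac{1}{A/B-\mu z}-1\bigr)$ equals the same expression $(1-A/B+\mu z)/\bigl(z(A/B-\mu z)\bigr)$, which upon reinstating $A$ and $B$ is precisely \eqref{eq:formuma_pn_a<1_-1<theta<0_bis}.

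All the manipulations are elementary algebra with generating functions; the two points that genuinely require care are the verification that $(1/a,\theta)$ sits in the green region (so that Lemma~\ref{lem:greenlemgf} is legitimately applicable, including the subtlety that $a=0$ must be peeled off separately) and the careful bookkeeping of the substitution $z\mapsto -z$ in the arguments of $E(\theta,\cdot)$, which is exactly what produces the positive arguments $\tfrac{az}{\theta(1+a)}$ and $\tfrac{az}{1+a}$ and the factor $\mu$ in the final formula. I do not expect any analytic difficulty beyond these, since everything can be read off as an identity of formal power series in the sense of the remark following Theorem~\ref{thm:main}.
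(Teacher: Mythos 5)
Your proof is correct and follows the same route as the paper's: apply the duality of Corollary~\ref{cor:second_hidden_duality} to the green-region formula of Lemma~\ref{lem:greenlemgf} with $a$ replaced by $1/a$, compute $\hat P_{1/a,\theta}(-z)=\mu-\tfrac{1}{z}\bigl(\tfrac{A}{B}-1\bigr)$, and simplify. Your explicit verification that $(1/a,\theta)$ lies in the green region and your separate treatment of the boundary case $a=0$ (where the corollary's hypothesis $a>0$ fails) are points the paper leaves implicit, so this is if anything slightly more careful than the published argument.
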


\begin{proof}
This will follow directly from the duality stated in Corollary~\ref{cor:second_hidden_duality}, combined with the expression provided in Lemma~\ref{lem:greenlemgf} for the green region. Indeed, denoting by $\hat P_{a,\theta}(z)$ the formula \eqref{eqn:greengf} in the green region, we have, using Corollary~\ref{cor:second_hidden_duality},
\begin{equation*}
    \sum_{n=0}^\infty p_n^a(\theta)z^n= \frac{\hat P_{1/a,\theta}(-z)}{1-z \, \hat P_{1/a,\theta}(-z)}= \frac{1}{z}\left(\frac{1/z}{1/z-\hat P_{1/a,\theta}(-z)}-1 \right). 
\end{equation*}
On the other hand, a computation starting from \eqref{eqn:greengf} shows that
\begin{equation*}
    \hat P_{1/a,\theta}(-z) = \mu -\frac{1}{z}\left(\frac{E(\theta, \frac{az}{\theta(1 + a)})}{E(\theta, \frac{az}{1 + a})}-1\right),
\end{equation*}
and we easily conclude \eqref{eq:formuma_pn_a<1_-1<theta<0_bis}.
\end{proof}

A simple corollary is a formula for the persistence exponent.

\begin{cor} \label{cor:yellowpe} 
Let $a\in[0,1]$ and $\theta\in[-1,-a]$. Recalling our notation $\mu=\frac{1+\frac{a}{\theta}}{1+a}$, the persistence exponent $\lambda$ is given by the inverse $\lambda=1/z$ of the smallest positive root $z$ of the equation $\frac{E(\theta,\frac{az}{\theta(1+a)})}{E(\theta,\frac{az}{1+a})} = \mu z$.
\end{cor}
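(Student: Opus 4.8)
The plan is to read the persistence exponent directly off the generating function established in Lemma~\ref{lem:a01mu}, following the general principle recorded in item~\ref{item:remarkmainpe} of the remarks after Theorem~\ref{thm:main}: the persistence exponent equals $1/z_0$, where $z_0$ is the zero of smallest modulus of the denominator of the generating function. This is legitimate because the radius of convergence of $\sum_n p_n^a(\theta) z^n$ is the reciprocal of $\limsup_n (p_n^a(\theta))^{1/n}$, i.e.\ of the exponential decay rate of $(p_n^a(\theta))$.

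First I would rewrite the right-hand side of \eqref{eq:formuma_pn_a<1_-1<theta<0_bis} over a common denominator. Setting $N(z):=E(\theta,\frac{az}{\theta(1+a)})$ and $M(z):=E(\theta,\frac{az}{1+a})$, both of which are entire in $z$ (since $\theta\in[-1,-a]\subseteq[-1,0]$ makes the defining series of $E(\theta,\cdot)$ converge on all of $\mathbb{C}$), Lemma~\ref{lem:a01mu} yields
$$
\sum_{n=0}^\infty p_n^a(\theta) z^n = \frac{1}{z}\cdot\frac{M(z)-N(z)+\mu z M(z)}{N(z)-\mu z M(z)}.
$$
Thus $\hat P_{a,\theta}$ is meromorphic, and its poles are contained in the zero set of the entire function $N(z)-\mu z M(z)$; the prefactor $1/z$ introduces no pole at the origin because the numerator vanishes there, as $M(0)=N(0)=1$. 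A zero $z$ of $N-\mu z M$ at which $M(z)\neq 0$ is exactly a solution of $\frac{N(z)}{M(z)}=\mu z$, i.e.\ of the equation in the statement.

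Next I would invoke Pringsheim's theorem: since the coefficients $p_n^a(\theta)$ are nonnegative, the singularity of $\hat P_{a,\theta}$ of smallest modulus lies on the positive real axis. Consequently the radius of convergence $z_0$ is the smallest positive zero of $N(z)-\mu z M(z)$, equivalently the smallest positive root of $\frac{E(\theta,\frac{az}{\theta(1+a)})}{E(\theta,\frac{az}{1+a})}=\mu z$, whence $\lambda=1/z_0$. This mirrors the arguments already used to obtain the persistence exponent in the blue and green regions, Corollaries~\ref{cor:peblue} and \ref{cor:greenpe}.

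The main point to secure is that this smallest positive zero of the denominator is a genuine pole of $\hat P_{a,\theta}$ and is not masked: one must check that the numerator $M-N+\mu z M$ does not vanish there to equal or higher order, and that the zeros of $M(z)=E(\theta,\frac{az}{1+a})$ (which turn $N/M$ into a pole rather than a zero, and hence force $M\neq 0$ at any root of the displayed equation) do not produce a singularity of smaller modulus. I expect both to follow from the nonnegativity of the coefficients together with the explicit structure of $E(\theta,\cdot)$, so that the dominant singularity is indeed furnished by the stated equation.
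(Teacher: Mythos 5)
Your proposal is correct and follows essentially the same route as the paper, which states Corollary~\ref{cor:yellowpe} as an immediate consequence of Lemma~\ref{lem:a01mu} via the principle in item~\ref{item:remarkmainpe} of the remarks after Theorem~\ref{thm:main}: the persistence exponent is the reciprocal of the smallest-modulus zero of the denominator of the generating function. Your additional care (clearing denominators to see that the poles of $\hat P_{a,\theta}$ lie among the zeros of the entire function $E(\theta,\frac{az}{\theta(1+a)})-\mu z\, E(\theta,\frac{az}{1+a})$, invoking Pringsheim's theorem to place the dominant singularity on the positive axis, and flagging the need to rule out numerator cancellation and masking by zeros of $E(\theta,\frac{az}{1+a})$) goes beyond what the paper records but is consistent with, not different from, its argument.
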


Contrary to the blue and green region, we did not find an explicit expression of the $(p_n^a(\theta))$ in terms of the combinatorial quantities $(\phi_\ell(\theta))$ from Lemma~\ref{lem:comblem}.

\section{The orange region}
\label{sec:orange}

First, we deal with the case $-b:=a\leq 0$ and $1\leq \theta\leq 1/b$. Note that, contrary to the other regions, we start with the computation for $\theta>1$ and then deduce the result for $\theta\in[0,1]$. This is due to the fact that the probabilities are much simpler for $\theta>1$.

In fact, for $b<0$, the persistence probabilities become zero at some point. Even more, the remaining positive persistence probabilities can be written down in a very simple form. The case $a=b=0$ is a special case of this computation.

\begin{lem} \label{lem:grey}  Let $a\in(-1,0]$ and $1\leq \theta\leq -1/a$. We set $b:=-a$. Then
\begin{align} \label{eqn:greysimpleclaim}
    p_n^{a}(\theta) = \frac{1}{(1-b)^{n+1}} \,\frac{\theta^{(n+1)n/2}}{(n+1)!} \,\left( \frac{1}{\theta^n} - b\right)_+^{n+1},
\end{align}
where $x_+:=\min(x,0)$. In particular,
\begin{itemize}
    \item If $b>0$ then we have 
$$
p_n^{a}(\theta) = 0,\qquad \text{for all $n$ such that $\theta^n b \geq 1$}.
$$
\item 
If $b=0$ then the formula simplifies to 
\begin{align*}
    p_n^{0}(\theta) = \frac{\theta^{-(n+1)n/2}}{(n+1)!}.
\end{align*}
\end{itemize}
\end{lem}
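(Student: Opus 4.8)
The plan is to compute $p_n^a(\theta)$ directly from its integral representation, exploiting the fact that for $\theta \geq 1$ the constraints $\theta x_i \leq x_{i+1}$ become quite restrictive and force a nested chain of inequalities. Setting $b := -a \in [0,1)$, the innovations $X_i$ are uniform on $[-a,1] = [b^{-}, 1]$; wait, they are uniform on $[-a,1] = [b, 1]$ after writing $-a = b$... let me be careful: $a \in (-1,0]$ means $-a \in [0,1)$, so $b = -a \in [0,1)$ and the $X_i$ are uniform on $[-a, 1] = [b, 1]$ only if $-a = b$, i.e.\ the left endpoint is $-a = b \geq 0$. So in fact the support is $[b,1] \subset [0,1]$ with $b \in [0,1)$.

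\textbf{Main computation.} Since the variables live in $[b,1]$ with $b \geq 0$ and $\theta \geq 1$, each constraint $x_{i+1} \geq \theta x_i$ together with $x_i \geq b$ iterates to give $x_{i+1} \geq \theta x_i \geq \theta^i x_1 \geq \theta^i b$. The key observation is that the whole system $\theta x_1 \leq x_2,\ldots,\theta x_n \leq x_{n+1}$ with all $x_i \in [b,1]$ is equivalent to a single ordered chain: one shows that the region $\{b \leq x_1,\ \theta x_i \leq x_{i+1} \leq 1\}$ can be parametrized so that the innermost variable ranges over an interval whose length depends polynomially on the outer ones. Concretely I would integrate from the inside out, writing
\begin{equation*}
p_n^a(\theta) = \frac{1}{(1-b)^{n+1}} \int_b^1 \int_{\theta x_1}^1 \cdots \int_{\theta x_n}^1 \dd x_{n+1}\cdots \dd x_1,
\end{equation*}
using that the normalizing volume is $(1+a)^{n+1} = (1-b)^{n+1}$. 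The innermost integral over $x_{n+1}$ gives $(1-\theta x_n)_{+}$, and after substituting $y_i := \theta^{i-1} x_i$ (or equivalently rescaling to linearize the chain of constraints) the iterated integral collapses to a single ordered-simplex integral whose value is computed by the standard formula for the volume of a simplex. The power of $\theta$ is bookkept by the Jacobian of the substitution, producing the factor $\theta^{(n+1)n/2}$, while the $(n+1)!$ is the simplex volume and the term $(\theta^{-n} - b)_{+}^{n+1}$ is the effective length of the feasible range for the bottom variable.

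\textbf{Positivity cutoff and special cases.} The truncation $(\cdot)_+$ arises precisely because the chain is feasible only when $\theta^n x_1 \leq x_{n+1} \leq 1$ admits a solution with $x_1 \geq b$, i.e.\ when $\theta^n b \leq 1$; otherwise the polytope is empty and $p_n^a(\theta) = 0$. This immediately yields the stated vanishing for $\theta^n b \geq 1$ when $b > 0$, and for $b = 0$ the truncation is inactive (since $\theta^{-n} > 0$), giving $p_n^0(\theta) = \theta^{-(n+1)n/2}/(n+1)!$ directly.

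\textbf{Expected obstacle.} The delicate point will be justifying that the nested integral with upper limits $1$ and lower limits $\theta x_i$ genuinely reduces to a pure simplex volume after the rescaling $y_i = \theta^{i-1} x_i$: one must check that the rescaled constraints $y_1 \leq y_2 \leq \cdots \leq y_{n+1}$ hold on the correct domain and, crucially, that the upper bound $x_{n+1} \leq 1$ transforms consistently so that the only binding endpoint constraints are the ordering and the single inequality $y_1 \geq \theta^{n}b$ versus $y_{n+1} \leq \theta^{n}$. Tracking how the cube-face $x_{n+1} \leq 1$ interacts with the rescaled ordering (rather than producing extraneous boundary pieces) is what makes the clean closed form, including the exact exponent $(n+1)n/2$ and the single truncation $(\theta^{-n}-b)_+^{n+1}$, nontrivial; I would verify it by induction on $n$, peeling off one integration at a time and confirming the inductive form of the integrand.
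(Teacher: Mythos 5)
Your strategy is sound and genuinely different from the paper's. The paper first sharpens the integration limits (observing that the chain forces $X_k\leq 1/\theta^{n+1-k}$, so the $k$-th integral runs only up to $1/\theta^{n-k}$), then sets $q_n^a(\theta):=(1-b)^{n+1}p_n^a(\theta)$, derives the recursion $q_n^a(\theta)=\int_b^{1/\theta^n}q_{n-1}^{\theta x_1}(\theta)\,\mbox{d}x_1$, and proves \eqref{eqn:greysimpleclaim} by induction on $n$, peeling off one integration at a time — exactly the fallback you sketch at the end. Your primary route instead collapses the whole polytope to an ordered simplex by a linear rescaling, which needs no induction and is arguably more conceptual: it exhibits the $1/(n+1)!$ as a simplex volume, the $(\theta^{-n}-b)_+$ as the length of the effective interval, and the vanishing for $\theta^n b\geq 1$ as emptiness of that interval, all at once.

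Two points need fixing to make it airtight. First, your substitution runs the wrong way: with $y_i=\theta^{i-1}x_i$ the constraint $\theta x_i\leq x_{i+1}$ becomes $\theta^2 y_i\leq y_{i+1}$, not an ordering. You need $y_i=\theta^{1-i}x_i$, under which $\theta x_i\leq x_{i+1}$ is equivalent to $y_i\leq y_{i+1}$, and the Jacobian $\prod_{i=1}^{n+1}\theta^{i-1}=\theta^{n(n+1)/2}$ produces precisely the factor you announce — so the slip is confined to the displayed exponent. Second, the obstacle you flag (that the cube faces $x_i\leq 1$, $x_i\geq b$ might cut extraneous boundary pieces) dissolves via a monotonicity observation: after rescaling, $y_i$ ranges over $[\theta^{1-i}b,\theta^{1-i}]$, and since $\theta\geq 1$ both the lower and the upper endpoints are nonincreasing in $i$; hence, under the ordering $y_1\leq\cdots\leq y_{n+1}$, every lower constraint with $i\geq 2$ follows from $y_i\geq y_1\geq b$ and every upper constraint with $i\leq n$ follows from $y_i\leq y_{n+1}\leq\theta^{-n}$. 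The feasible region is therefore exactly the simplex $\{b\leq y_1\leq\cdots\leq y_{n+1}\leq\theta^{-n}\}$ of volume $(\theta^{-n}-b)_+^{n+1}/(n+1)!$, yielding \eqref{eqn:greysimpleclaim}. (Your stated binding pair ``$y_1\geq\theta^n b$ versus $y_{n+1}\leq\theta^n$'' mixes two normalizations; with $y_i=\theta^{n+1-i}x_i$ the binding constraints are $y_1\geq\theta^n b$ and $y_{n+1}\leq 1$, giving volume $(1-\theta^n b)_+^{n+1}/(n+1)!$ against Jacobian $\theta^{-n(n+1)/2}$, which agrees with the other normalization since $\theta^{-n(n+1)/2}(1-\theta^n b)^{n+1}=\theta^{n(n+1)/2}(\theta^{-n}-b)^{n+1}$.) This monotonicity fact is the same observation that opens the paper's proof, there phrased as: $X_1>1/\theta^{k-1}$ together with the chain would force $X_k>1$.
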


We remark that, for $b>0$, due to the fact that $\theta b <1$ and $\theta>1$, we know that $\theta^n b\geq 1$ for all $n\geq n_0$ for some finite $n_0>1$.

Further, given the simple form of the persistence probabilities, one can easily compute the generating function, e.g.\ for $b=0$:
\begin{equation} \label{eqn:b=0formula}
\sum_{n=0}^\infty  p_n^0(\theta)z^n = \frac{1}{z}(E(\theta^{-1},z)-1).
\end{equation}

Another remark is that the formula \eqref{eqn:b=0formula} can also be obtained from Lemma~\ref{lem:formula_pna_theta>0} together with the duality \eqref{eq:duality_theta/pos}: Fixing $\theta>1$ and letting $a\to 0$, we have
$$
\sum_{n=0}^\infty  p_n^a(\theta)z^n = \sum_{n=0}^\infty  p_n^{1/a}(1/\theta)z^n = \frac{E(\theta^{-1},\frac{z}{a+1})-E(\theta^{-1},-\frac{az}{1+a})}{zE(\theta^{-1},-\frac{az}{1+a})} \to \frac{1}{z} (E(\theta^{-1},z)-1) .
$$

Concerning the persistence exponent, we notice that the persistence probabilities in Lemma~\ref{lem:grey} decay superexponentially.

\begin{proof}[Proof of Lemma~\ref{lem:grey}]
Note that if $X_1>1/\theta$ then $\theta X_1 \leq X_2$ implies that $X_2>1$, which is not possible. Similarly, $X_1>1/\theta^{k-1}$ and the relations $\theta^{k-1} X_1 \leq \theta^{k-2} X_2 \leq \ldots \leq \theta X_{k-1} \leq X_k$ imply $X_k>1$.     On the other hand, if $x_k\geq b$ then $\theta x_k \geq \theta b\geq b$.

First, the above reasoning implies that $p_n^a(\theta)=0$ for $b\geq 1/\theta^n$. Thus, for the rest of the proof, we can concentrate on the case $b\leq 1/\theta^n$.
    
Taking the above considerations into account, we have
\begin{align*}
    p_n^a(\theta) & = \P( \theta X_1\leq X_2, \ldots, \theta X_n \leq X_{n+1})
    \\
    & = \frac{1}{(1-b)^{n+1}}\, \int_b^{1/\theta^n}\int_{\theta x_1}^{1/\theta^{n-1}} \ldots \int_{\theta x_{n-1}}^{1/\theta} \int_{\theta x_n}^1 1 \dd x_{n+1} \dd x_n \ldots \dd x_2 \dd x_1.
    \end{align*}
For simplicity of notation, set
\begin{align}
q_n^a(\theta) & := (1-b)^{n+1} p_n^a(\theta)
\notag \\
&  = \int_b^{1/\theta^n}\int_{\theta x_1}^{1/\theta^{n-1}} \ldots \int_{\theta x_{n-1}}^{1/\theta} \int_{\theta x_n}^1 1 \dd x_{n+1} \dd x_n \ldots \dd x_2 \dd x_1 \label{eqn:greysimpleline1}
\\ & = \int_b^{1/\theta^n} q_{n-1}^{\theta x_1}(\theta)\, \dd x_1, \notag
    \end{align}
and it only remains to check the claim on the lemma by induction using the latter recursion. For $n=1$, we have due to \eqref{eqn:greysimpleline1}
\begin{align*}
q_n^a(\theta)
&  = \int_b^{1/\theta}\int_{\theta x_1}^{1} 1 \dd x_2 \dd x_1 =  \int_b^{1/\theta}(1-\theta x_1) \dd x_1  = \frac{1}{\theta}-b - \frac{\theta}{2}\left( \frac{1}{\theta^2} - b^2\right) = \frac{\theta}{2}\left( \frac{1}{\theta} - b\right)^2,
\end{align*}
in accordance with \eqref{eqn:greysimpleclaim}. If \eqref{eqn:greysimpleclaim}
 is true for $n-1$, we get from the recursion (using that $\frac{1}{\theta^{n-1}}\geq \theta x_1$) that
\begin{align*}
q_n^a(\theta)
& = \int_b^{1/\theta^n} q_{n-1}^{\theta x_1}(\theta) \dd x_1
\\
& = \int_b^{1/\theta^n} \frac{\theta^{(n-1)n/2}}{n!} \,\left( \frac{1}{\theta^{n-1}} - \theta x_1\right)^{n} \dd x_1
\\
& = \frac{\theta^{(n-1)n/2}}{n!} \, \int_b^{1/\theta^n} \theta^{n} \left( \frac{1}{\theta^{n}} - x_1\right)^{n} \dd x_1
\\
& = \frac{\theta^{(n+1)n/2}}{(n+1)!} \, \left. (-1) \left( \frac{1}{\theta^{n}} - x_1\right)^{n+1}\, \right|_{x_1=b}^{1/\theta^n}
\\
& = \frac{\theta^{(n+1)n/2}}{(n+1)!} \, \left( \frac{1}{\theta^{n}} -b\right)^{n+1},
\end{align*}
as required in the case $b\leq 1/\theta^n$.
\end{proof}




We now treat the case $\theta\in(0,1]$. 

\begin{lem}[Equation~\eqref{eqn:darkorangegfprime} of Theorem~\ref{thm:main}]  \label{cor:regionp} Let $\theta\in(0,1]$ and $0\leq -a=:b\leq \theta$. Assume that $p\geq 1$ is such that $\theta^{p+1}< b \leq \theta^p$. Then the generating function of the persistence probabilities is given by
\begin{align} \label{eqn:darkorangegf}
    \sum_{n=0}^\infty p_n^a(\theta) z^n=\frac{F(z)}{1-z F(z)},
\end{align}
    where
\begin{align} \label{eqn:darkorangedefnf}
    F(z):=\sum_{i=0}^{p} (-1)^i  \frac{(\theta^i-b)^{i+1}}{(i+1)!\theta^{i(i+1)/2}(1-b)^{i+1}}\, z^i = \sum_{i=0}^\infty (-1)^i  \frac{(\theta^i-b)_+^{i+1}}{(i+1)!\theta^{i(i+1)/2}(1-b)^{i+1}}\, z^i.
\end{align}
    In particular, the generating function $\sum_{n=0}^\infty p_n^a(\theta) z^n$ is a piecewise rational function.
\end{lem}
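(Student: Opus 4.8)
The plan is to prove the two displayed identities \eqref{eqn:darkorangegf} and \eqref{eqn:darkorangedefnf} by setting up a renewal-type decomposition of the persistence event and then recognizing the series $F(z)$ via the computation already carried out in Lemma~\ref{lem:grey}. The key observation is that for $\theta\in(0,1]$ and $b=-a\in(0,\theta]$, the innovations take values in $[b,1]$ with $b>0$, so the \emph{first} index $i$ where a ``reset'' can occur controls the structure. Concretely, I would condition on the first time the chain of constraints $\theta x_j\leq x_{j+1}$ forces one of the $X$'s above the threshold $1/\theta^{\,i}$, exactly as in the proof of Lemma~\ref{lem:grey}. Writing $F(z)=\sum_i f_i z^i$, the coefficient $f_i$ should be precisely the probability of a ``short'' persistence block of length $i$, weighted correctly, and \eqref{eqn:darkorangegf} is then the standard generating-function identity for a renewal sequence where $p_n^a(\theta)$ counts all ways of tiling $\{1,\dots,n\}$ by such blocks.

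\textbf{Setting up the renewal decomposition.}
First I would establish a recursion of the form
\[
 p_n^a(\theta) = \sum_{i=0}^{n} f_i \, p_{n-i-1}^a(\theta) + (\text{boundary term}),
\]
obtained by decomposing on the position of the first ``break point.'' The natural candidate for $f_i$ comes from integrating out a block of consecutive variables that stay below the relevant thresholds; this is exactly the integral computed in Lemma~\ref{lem:grey}, but now with the roles of $\theta$ and $1/\theta$ adjusted because here $\theta\leq 1$. In detail, I expect $f_i$ to equal
\[
 f_i = (-1)^i \frac{(\theta^i-b)_+^{\,i+1}}{(i+1)!\,\theta^{i(i+1)/2}(1-b)^{i+1}},
\]
so that $F(z)=\sum_{i\geq 0} f_i z^i$. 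The truncation at $i=p$ in \eqref{eqn:darkorangedefnf} is automatic: once $\theta^i<b$, i.e.\ $i>p$ (using the hypothesis $\theta^{p+1}<b\leq\theta^p$), the factor $(\theta^i-b)_+$ vanishes, so the two expressions for $F(z)$ agree and $F$ is a polynomial of degree $p$.

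\textbf{From recursion to generating function.}
Once the recursion $p_n^a(\theta)=\sum_{i=0}^{n} f_i\, p_{n-i-1}^a(\theta)$ is in hand (with $p_{-1}^a(\theta):=1$ playing the role of the empty block, and $p_0^a(\theta)=1$), I multiply by $z^n$ and sum over $n\geq 0$. Denoting $\hat P(z):=\sum_{n\geq0}p_n^a(\theta)z^n$, the convolution structure yields
\[
 \hat P(z) = F(z)\bigl(1 + z\,\hat P(z)\bigr),
\]
where the leading $1$ accounts for the terminating empty block and the $z\,\hat P(z)$ for continuing after the first block of length $i$ (which consumes $i+1$ indices, hence the shift by one beyond the $z^i$ already inside $F$). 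Solving gives $\hat P(z)=F(z)/(1-zF(z))$, which is \eqref{eqn:darkorangegf}. Since $F$ is a polynomial and the denominator $1-zF(z)$ is a polynomial in $z$, the generating function is rational on each region $\theta^{p+1}<b\leq\theta^p$; as $b$ crosses these thresholds the degree $p$ of $F$ jumps, so $\hat P$ is piecewise rational in the claimed sense.

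\textbf{The main obstacle.}
The delicate point is the precise bookkeeping of the renewal decomposition, namely verifying that the first-break-point argument produces \emph{exactly} the coefficients $f_i$ of Lemma~\ref{lem:grey} and that the index shift is correct. In particular, one must check that after a block of length $i$ the remaining variables $X_{i+2},\dots$ are again i.i.d.\ uniform on $[-a,1]$ and satisfy an independent copy of the original persistence event of length $n-i-1$, i.e.\ that conditioning on the break renders the remainder distributionally a fresh MA(1) persistence problem. The thresholds $1/\theta^{\,i}$ appearing in Lemma~\ref{lem:grey} must be matched to the geometry here via $\theta\leftrightarrow 1/\theta$, and one has to confirm that the boundary term in the naive recursion collapses into the clean form above (this is where the convention $p_{-1}^a(\theta):=1$ and the leading $1$ in $\hat P=F(1+z\hat P)$ come from). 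Once this combinatorial/measure-theoretic identification is secured, the generating-function manipulation and the piecewise-rationality conclusion are immediate.
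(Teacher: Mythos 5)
Your skeleton ends at the right identity, but there is a genuine gap exactly at the step you flag as ``the main obstacle'', and the mechanism you propose for closing it would fail. The coefficients $f_i=(-1)^i\frac{(\theta^i-b)_+^{i+1}}{(i+1)!\,\theta^{i(i+1)/2}(1-b)^{i+1}}$ alternate in sign, so they cannot be ``the probability of a short persistence block'', and $p_n^a(\theta)$ is not a sum over tilings of $\{1,\dots,n\}$ by such blocks: no conditioning on a ``first break point'' can produce signed weights. Relatedly, your heuristic of ``the first time the chain forces some $X$ above the threshold $1/\theta^{\,i}$'' does not apply in this region at all --- that forcing mechanism (in the proof of Lemma~\ref{lem:grey}) requires the slope parameter to be $\geq 1$, whereas here $\theta\leq 1$; it lives entirely on the other side of a duality. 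The recursion you want, $p_n^a(\theta)=\sum_{i=0}^{n}f_i\,p_{n-i-1}^a(\theta)$ with $p_{-1}^a(\theta):=1$, is precisely the signed identity \eqref{eqn:hiddenduality} of Lemma~\ref{lem:duality2}, with $f_i=(-1)^i p_i^a(1/\theta)$, and its proof is not a renewal argument but an inclusion--exclusion telescoping: one writes $B_i:=\{X_i\geq \theta^{-1}X_{i+1}\}$ as the a.s.\ complement of $A_i:=\{\theta X_i\leq X_{i+1}\}$ and peels off $B$-events from the right. In that argument the independence of the two resulting blocks is automatic because they involve disjoint sets of variables --- there is no conditioning and hence no need to verify that ``the remaining variables are again a fresh i.i.d.\ persistence problem'' (your worry on this point signals that the conditioning argument you have in mind does not exist); instead, exchangeability and time reversal, as in \eqref{eqn:onlythingtoreplacehiddenduality2}, convert the $B$-block probability into $p_{k-1}^a(1/\theta)$ for the \emph{same} uniform law on $[-a,1]$.

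Once this is repaired --- most efficiently by simply invoking the generating-function form \eqref{eqn:hiddendualityGF}, which is the paper's entire proof --- the rest of your write-up is correct and matches the paper: Lemma~\ref{lem:grey} applies to the dual parameter $1/\theta\in[1,1/b]$ (this is exactly where the hypothesis $b\leq\theta$ enters), formula \eqref{eqn:greysimpleclaim} gives $\hat P_{1/\theta}(-z)=F(z)$, the series truncates at $i=p$ because $(\theta^i-b)_+=0$ for $i\geq p+1$ by $\theta^{p+1}<b$, and solving $\hat P_\theta(z)=F(z)\bigl(1+z\hat P_\theta(z)\bigr)$ yields \eqref{eqn:darkorangegf} together with piecewise rationality as $b$ crosses the thresholds $\theta^p$.
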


\begin{proof} The lemma follows directly from the duality \eqref{eqn:hiddendualityGF}
and Lemma~\ref{lem:grey}.
\end{proof}

We remark that the function $F$ is nothing else but the generating function of the persistence probabilities $p_n^a(1/\theta)$ computed from \eqref{eqn:greysimpleclaim}.

A simple corollary is a formula for the persistence exponent.

\begin{cor} \label{cor:orangepe} 
Let $\theta\in(0,1]$ and $0\leq -a=:b\leq \theta$. 
 The persistence exponent $\lambda$ is given by the inverse $\lambda=1/z$ of the smallest positive root $z$ of the equation
\begin{align} \label{eqn:pefororanged}
z F(z) & = 1,
\end{align}
 where $F$ is the function from \eqref{eqn:darkorangedefnf}.
\end{cor}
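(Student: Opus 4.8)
The plan is to read off the persistence exponent directly from the explicit generating function established in Lemma~\ref{cor:regionp}, following the general principle recorded in item \ref{item:remarkmainpe} of the remarks after Theorem~\ref{thm:main}. By \eqref{eqn:darkorangegf} the generating function equals $F(z)/(1-zF(z))$, and by \eqref{eqn:darkorangedefnf} the function $F$ is a polynomial of degree $p$ with $F(0)=1$. Consequently the generating function is a rational function whose denominator is the polynomial $G(z):=1-zF(z)$ of degree $p+1$, normalized by $G(0)=1$. The equation $zF(z)=1$ appearing in \eqref{eqn:pefororanged} is precisely $G(z)=0$.

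First I would invoke the convergence statement of item \ref{rem:itempowerseries}: the power series $\sum_n p_n^a(\theta)z^n$ converges for $|z|<|z_0|$, where $z_0$ is the zero of $G$ of smallest modulus, and it agrees with the rational function on that disc. Since the coefficients $p_n^a(\theta)$ are probabilities, hence non-negative, Pringsheim's theorem forces the radius of convergence to be itself a singularity of the generating function. As the singularities of a rational function are exactly the zeros of its denominator (those not cancelled by the numerator), $z_0$ must be a positive real zero of $G$, and it is the smallest positive root of \eqref{eqn:pefororanged}. Note that the radius of convergence is finite, because $G$ is a non-constant polynomial and hence has roots, so such a smallest positive root indeed exists.

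Next I would verify that $z_0$ is a genuine (non-removable) pole, so that the decay rate is exactly $\lambda=1/z_0$. At any root of $zF(z)=1$ one has $F(z_0)=1/z_0\neq 0$, so the numerator $F$ does not vanish at $z_0$ and no cancellation with the denominator occurs. A partial-fraction expansion around the simple pole $z_0$ (equivalently a residue computation) then yields $p_n^a(\theta)\sim c\,z_0^{-(n+1)}$ for some constant $c>0$, identifying the exponential decay rate, i.e.\ the persistence exponent, as $\lambda=1/z_0$.

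The only point requiring care is the appeal to Pringsheim's theorem, which is what guarantees that the smallest-\emph{modulus} zero of $G$ coincides with the smallest \emph{positive} root of $zF(z)=1$; without non-negativity of the coefficients the dominant singularity need not be real. I expect no genuine obstacle here, since the non-negativity of the $p_n^a(\theta)$ and the explicit polynomial form of $F$ make all hypotheses transparent, and the verification $F(z_0)\neq 0$ that rules out a removable singularity is immediate from the defining equation itself.
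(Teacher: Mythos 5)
Your proposal is correct and is essentially the paper's own (implicit) argument: the paper offers no separate proof of Corollary~\ref{cor:orangepe}, treating it as immediate from Lemma~\ref{cor:regionp} together with the principle recorded in the remarks after Theorem~\ref{thm:main} (the exponent is the reciprocal of the smallest-modulus zero of the denominator, which non-negativity of the $p_n^a(\theta)$ plus Pringsheim forces to be the smallest \emph{positive} root of $zF(z)=1$), and your writeup simply makes these standard steps explicit, including the neat observation that $F(z_0)=1/z_0\neq 0$ rules out numerator--denominator cancellation. One boundary caveat: when $b=0$ your claims that $F$ is a polynomial of degree $p$ and that $1-zF(z)$ therefore has a root both fail — there $1-zF(z)=E(\theta,-z)$ is entire, and at $\theta=1$, $b=0$ it equals $e^{-z}$, which has no zero at all, consistent with the superexponential decay $p_n^0(1)=1/(n+1)!$; for $b>0$ (the case actually covered by the hypothesis $\theta^{p+1}<b\leq\theta^p$ of Lemma~\ref{cor:regionp}) your argument is complete, and note that simplicity of the pole is not needed for the exponent itself, only for the precise prefactor asymptotics.
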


We remark that for $b>0$ the equation \eqref{eqn:pefororanged} is a polynomial equation for the persistence exponent $\lambda$ (the degree of the polynomial is $p+1$ if $\theta^p< b \leq \theta^p$).

\subsection*{Acknowledgments}
KR is supported by the project RAWABRANCH (ANR-23-CE40-0008), funded by the French National Research Agency.

\end{document}